\newtheorem{theorem}{Theorem}[section]
\newtheorem{lemma}[theorem]{Lemma}
\newtheorem{corollary}[theorem]{Corollary}
\theoremstyle{definition}
\theoremstyle{remark}
\newtheorem{remark}[theorem]{Remark}
\numberwithin{equation}{section}
\newcommand{\eps}{\varepsilon}
\newcommand{\ml}{\mathcal L}
\begin{document}
\title{Rigorous Pointwise approximations for invariant densities of nonuniformly expanding maps}
\author{Wael Bahsoun$^{\dagger}$}
\address{Department of Mathematical Sciences, Loughborough University,
Loughborough, Leicestershire, LE11 3TU, UK}
\email{$\dagger$ W.Bahsoun@lboro.ac.uk, $\ddagger$ Y.Duan@lboro.ac.uk}
\author{Christopher Bose$^{*}$}
\address{Department of Mathematics and Statistics, University of Victoria,
   PO BOX 3045 STN CSC, Victoria, B.C., V8W 3R4, Canada}
\email{$*$ cbose@uvic.ca}
\author{Yuejiao Duan$^{\ddagger}$}
\subjclass{Primary 37A05, 37E05}
\date{\today}
\keywords{Interval maps with a neutral fixed point, Invariant densities, Ulam's method, Pointwise approximations.}
\begin{abstract}
We use an Ulam-type discretization scheme to provide {\em{pointwise}} approximations for invariant densities of interval maps with a neutral fixed point. We prove that the approximate invariant density converges pointwise to the true density at a rate $C^*\cdot\frac{\ln m}{m}$, where $C^*$ is a computable fixed constant and $m^{-1}$ is the mesh size of the discretization.
\end{abstract}
\maketitle
\pagestyle{myheadings}
\markboth{Pointwise Approximations of Invariant Densities}{W. Bahsoun, C. Bose, Y. Duan}

\section{introduction}
Ulam-type discretization schemes provide rigorous approximations for dynamical invariants. Moreover, such discretizations are easily
implementable on a computer. In \cite{Li} it was shown that the original Ulam method \cite{Ulam} is remarkably successful in approximating isolated spectrum of transfer operators associated with
piecewise expanding maps of the interval. In particular, it  was shown that this method provides rigorous approximations in the $L^{1}-$norm for invariant densities of Lasota-Yorke maps (see \cite{Li} and references therein). This method has been also successful when dealing with multi-dimensional piecewise expanding maps \cite{M2}, and partially successful\footnote{See \cite{BKL} for examples where the pure Ulam method provides fake spectra for certain hyperbolic systems.} in providing rigorous approximations for certain uniformly hyperbolic systems \cite{Froy1, Froy2}. Recently, Blank \cite{Blank} and Murray \cite{RM} independently succeeded in applying the pure Ulam method in a non-uniformly hyperbolic setting. They obtained approximations in the $L^{1}-$norm for invariant densities of certain non-uniformly expanding maps of the interval\footnote{In \cite{RM}, in addition to proving convergence, Murray also obtained an upper bound on the rate of convergence.}.

\bigskip

Although $L^1$ approximations provide significant information about the long-term statistics of the underlying system, they are not helpful when dealing with rare events in dynamical systems. In fact,  when studying \textit{rare events} in dynamical systems \cite{AFV, K} one often obtains probabilistic laws that depend on pointwise information from the invariant density of the system. In particular, \textit{extreme value laws} of interval maps with a neutral fixed point depend pointwise on the invariant density of the map \cite{HNT}.

\bigskip
Statistical properties of non-uniformly expanding maps were studied by Pianigiani in \cite{P} who first proved existence of invariant densities of such maps. Later, it was independently proved in \cite{H,LSV,Y} that such maps exhibit polynomial decay of correlations. The slow mixing behaviour that such maps exhibit has made them good testing tools for real and difficult physical problems.

\bigskip

The difficulty in obtaining pointwise approximations for invariant densities of interval maps with a neutral fixed point is two fold. Firstly, the transfer operator associated with such maps does not have a spectral gap in a classical Banach space. Therefore, powerful perturbation results \cite{KL}\footnote{See also \cite{GN} for another perturbation result.} are not directly available in this setting. Secondly, invariant densities of such maps are not $L^{\infty}$ functions. Consequently, to provide pointwise approximation of such densities, one should first measure the approximations in a `properly weighted' $L^{\infty}-$norm.

\bigskip

In this note we use a piecewise linear Ulam-type discretization scheme to provide pointwise approximations for invariant densities of nonuniformly expanding interval maps. Our main result is stated in Corollary \ref{co1}. For $x\in(0,1]$ we prove that the approximate invariant density converges pointwise to the true density at a rate $\frac{C^*}{x^{1+\alpha}}\cdot\frac{\ln m}{m}$, where $C^*$ is a computable fixed constant, $\alpha\in(0,1)$ is a fixed constant, and $m^{-1}$ is the mesh size of the discretization. To overcome the spectral difficulties and the unboundedness of the densities which we discussed above, we first induce the map and obtain a uniformly piecewise, expanding and onto map. Then we perform our discretization on the induced space. After that we pull back, both the invariant density and the approximate one to the full space and measure their difference in a weighted $L^{\infty}$-norm. Full details of our strategy is given in subsection \ref{strategy}.

\bigskip

In section \ref{expand}, we recall results on uniformly piecewise expanding and onto maps. Moreover, we introduce our discretization scheme and recall results about uniform approximations for invariant densities of uniformly piecewise expanding and onto maps. In section \ref{nonexp}, we introduce our non-uniformly expanding system, set up our strategy, and state our main results, Theorem \ref{main} and Corollary \ref{co1}. Section \ref{proofs} contains technical Lemmas and the proof of Theorem \ref{main}. Section \ref{Algo} presents an algorithm based on the result of Corollary \ref{co1} and discusses its feasibility.

\section{Preliminaries}\label{expand}
\subsection{A piecewise expanding system}\label{expanding}
Let $(\Delta,\mathfrak B,\hat\lambda)$ denote the measure space where $\Delta$ is a closed interval, $\mathfrak B$ is Borel $\sigma$-algebra and $\hat \lambda$ is normalized Lebesgue measure on $\Delta$. Let $\hat T:\Delta\to \Delta$ be a measurable transformation. We assume that there exists a countable partition $\mathcal P $ of $\Delta$, which consists of a sequence of intervals, $\mathcal P=\{I_i\}_{i=0}^\infty$, such that
\begin{enumerate}
\item for each $i=1,\dots, \infty$, $\hat T_i:=\hat T_{|\overset{\circ}{I}_i}$ is monotone, $C^{2}$ and it extends to a $C^2$ function on $\bar I_i$;
\item $\hat T_i(I_i)=\Delta$; i.e., for each $i=1,\dots, \infty$, $\hat T_i$ is onto;
\item there exists a constant $D>0$ such that  $\sup_i\sup_{x\in I_i}\frac{|\hat{T}''(x)|}{(\hat{T}'(x))^2}\le D$ ;
\item there exits a number $\gamma$ such that $\frac{1}{|\hat T_i'|}\le\gamma<1$.
\end{enumerate}
Let $\hat \ml:L^1\to L^1$ denote the transfer operator (Perron-Frobenius) \cite{Ba, BG} associated to $\hat T$:
$$\hat \ml f(x)=\sum_{y=\hat T^{-1}x}\frac{f(y)}{|\hat T'(y)|}.$$
Under the above assumptions, among other ergodic properties, it is well known (see for instance \cite{Bo}) $\hat T$ admits a unique invariant density $\hat f$; i.e. $\hat\ml\hat f=\hat f$. Moreover, $\hat\ml$ admits a spectral gap when acting on the space of Lipschitz continuous functions over $\Delta$ \cite{BB}\footnote{In \cite{BB}, a Lasota-Yorke inequality was obtained for Markov interval maps with a finite partition. The proof carries over for piecewise onto maps with a countable number of branches satisfying assumptions of subsection \ref{expanding}.}. We will denote by $BV(\Delta)$ the space of functions of bounded variation defined on the interval $\Delta$. Set $||\cdot||_{BV(\Delta)}:= V_{\Delta}+||\cdot||_{1,\Delta}$, where $ V_{\Delta}$ denotes the one-dimensional variation over $\Delta$. Then it is well known that $(BV(\Delta),  ||\cdot||_{BV(\Delta)})$ is a Banach space and $\hat\ml$ satisfies the following inequality (see \cite{P} for instance): there exists a constant $C_{LY}>0$ such that for any $f\in BV(\Delta)$, we have
\begin{equation}\label{LYBV}
V_{\Delta}\hat\ml f\le\gamma V_{\Delta} f+C_{LY}||f||_{1,\Delta}.
\end{equation}
Inequality \eqref{LYBV} is called the Lasota-Yorke inequality.
\subsection{Markov Discretization}\label{sec:markov}
We now introduce a discretization scheme which enables us to obtain rigorous uniform approximation of $\hat f$ the invariant density of $\hat T$. We use a piecewise linear approximations which was introduced by
Ding and Li \cite{DL}. Let $\eta=\{c_{i}\}_{i=0}^{m}$ be a partition of $\Delta$ into intervals. Since uniform partitions are the first choice for numerical work, we set $c_i - c_{i-1} = \frac{|\Delta|}{m}$, where $|\Delta|$ is the length of $\Delta$.  Everything we do can be easily modified for non-uniform partitions with only minor notational changes.  Let
$$\varphi_{i}=\chi_{[c_{i-1},c_{i}]} \text{ and } \phi_{i}(x)=m\int_0^{x}\varphi_{i}d\lambda.$$
Let $\psi_i$ denote a set of {\bf hat} functions over $\eta$:
\begin{equation}\label{hat}
\psi_{0}:=(1-\phi_{1})\,, \psi_{m}:=\phi_{m}\text{ and for }i=1,\dots,m-1\,, \psi_{i}:=(\phi_{i}-\phi_{i+1}).
\end{equation}
For $f \in L^1$, we set $I_i := [c_{i-1}, c_i]$ and
$$f_i := \frac{m}{|\Delta|} \int_{I_i} f \, dx, ~~i= 1, 2, \dots m,$$
the average of $f$ over the associated partition cell. For $f \in L^1$ we set
$$Q_m f := f_1\psi_0 + \sum_{i=1}^{m-1}  \frac{f_i + f_{i+1}}{2}  \psi_i  + f_m \psi_m$$
Obviously, the operator $Q_m$ retains good stochastic properties; i.e.,
\begin{itemize}
\item for $f\ge 0$, $Q_mf\ge 0$;
\item $\int Q_mf=\int f$.
\end{itemize}
 We now define a piecewise linear Markov discretization of $\hat\ml$ by
\begin{equation}\label{discrete}
\mathbb{P}_m:=Q_m\circ \hat\ml.
\end{equation}
Notice that $\mathbb{P}_m$ is a finite-rank Markov operator whose range is
contained in the space of continuous, piecewise linear functions with respect to $\eta$.
The matrix representation of $\mathbb{P}_m$ restricted to this finite-dimensional space and
with respect to the basis $\{\psi_i\}$ is a (row) stochastic matrix, with entries
$$p_{ij}:=m \int_{I_j} \hat\ml\psi_{i}\geq 0.$$
By the Perron-Frobenius Theorem for stochastic matrices \cite{LT}, $\mathbb P_m$ has a left invariant density $\hat f_m$; i.e.,
$$\hat f_m=\hat f_m \mathbb P_m.$$
The following theorem was proved in \cite{BB}:
\begin{theorem}\label{BB}
There exits a computable constant $\hat C$ such that for any $m\in\mathbb N$
$$||\hat f-\hat f_m||_{\infty}\le \hat C\frac{\ln m}{m}.$$
\end{theorem}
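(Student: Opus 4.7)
The strategy is to exploit the invariance relations $\hat\ml \hat f = \hat f$ and $\mathbb{P}_m\hat f_m = \hat f_m$ to derive a telescoped error identity, combine it with a uniform spectral gap for $\mathbb{P}_m$, and finally balance two competing estimates to extract the $\ln m/m$ rate.

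First I would set up the error identity. Adding and subtracting $Q_m\hat\ml\hat f$ and using $Q_m\hat\ml\hat f = Q_m\hat f$ (because $\hat\ml\hat f=\hat f$) gives
\[
\hat f - \hat f_m \;=\; (I - Q_m)\hat f + \mathbb{P}_m(\hat f - \hat f_m).
\]
Both sides integrate to zero, so iterating on the mean-zero subspace yields the Neumann-type expansion
\[
\hat f - \hat f_m \;=\; \sum_{k=0}^{\infty} \mathbb{P}_m^{\,k}(I - Q_m)\hat f,
\]
provided I can show that $\mathbb{P}_m^N(\hat f-\hat f_m) \to 0$ in $L^\infty$, which follows from the uniform spectral gap below.

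Next I would upgrade the Lasota-Yorke inequality \eqref{LYBV} for $\hat\ml$ to a \emph{uniform} Lasota-Yorke inequality for $\mathbb{P}_m$. The projection $Q_m$ is $L^1$-Markov and satisfies $V(Q_m f) \le V(f)$ (up to a harmless constant) because the nodal values of $Q_m f$ are cell-averages of $f$; composing with \eqref{LYBV} produces constants independent of $m$. A Keller–Liverani-type argument (or a direct Ionescu-Tulcea–Marinescu decomposition, as is done in \cite{BB}) then upgrades this to a spectral gap on $BV(\Delta)$ that is uniform in $m$: there exist $C>0$ and $\rho<1$, both independent of $m$, with $\|\mathbb{P}_m^{\,k} g\|_{BV(\Delta)} \le C\rho^k \|g\|_{BV(\Delta)}$ for every mean-zero $g \in BV(\Delta)$.

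The final step is to interpolate. I would use two complementary bounds on the terms of the series. A short calculation, using that $\hat f$ is Lipschitz on each partition cell and has bounded total variation, gives the cell-wise estimate $\|(I-Q_m)\hat f\|_\infty \le A/m$ at points that lie at least one mesh away from the (possibly dense) partition endpoints, together with the crude bound $\|(I-Q_m)\hat f\|_{BV(\Delta)} \le B$. Combined with the Markov property $\|\mathbb{P}_m g\|_\infty \le \|g\|_\infty$ and the embedding $\|g\|_\infty \le \|g\|_{BV(\Delta)}$, I would estimate
\[
\bigl\|\mathbb{P}_m^{\,k}(I-Q_m)\hat f\bigr\|_\infty \;\le\; \min\!\bigl(A/m,\; CB\rho^k\bigr).
\]
Splitting the sum at $K := \lceil \ln m / |\ln\rho|\rceil$, the head contributes $K\cdot A/m \asymp (\ln m)/m$ and the tail contributes $\sum_{k\ge K} CB\rho^k \le C'B/m$, giving the claimed bound.

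The hard part is the uniform spectral gap for $\mathbb{P}_m$ (with constants that are explicit enough to be ``computable'', as asserted in the theorem) and the sharp $L^\infty$ estimate on $(I-Q_m)\hat f$: the logarithm in the rate is essentially the price paid for the countable-branch structure, since jumps of $\hat f$ at countably many partition endpoints preclude a clean $1/m$ bound in the sup norm and must be handled by the BV tail of the expansion.
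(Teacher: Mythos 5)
This statement is not proved in the paper at all: the text explicitly says ``The following theorem was proved in \cite{BB}'' and cites it as an external result, with Section~5 adding that the computation of $\hat C$ in \cite{BB} ``is based on the spectral stability result of \cite{KL}.'' So there is no internal proof to compare against; the only thing to judge is whether your sketch is a plausible alternative to the Keller--Liverani perturbation machinery that \cite{BB} actually uses.

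Your route (telescoped error identity, Neumann-type expansion, interpolation between the $L^\infty$ smallness of $(I-Q_m)\hat f$ and the BV-geometric decay) is a legitimate and well-known alternative to the Keller--Liverani approach, and your identification of the crux --- a uniform-in-$m$ Lasota--Yorke inequality and hence a uniform spectral gap for $\mathbb{P}_m$ --- is exactly right; the paper itself flags the uniform Lasota--Yorke inequality for $\mathbb{P}_m$ in equation \eqref{hatfbd}. The split at $K\asymp\ln m$ giving $(\ln m)/m$ is also the right numerology. However, there is a real gap in the interpolation step: you invoke a ``Markov property $\|\mathbb{P}_m g\|_\infty\le\|g\|_\infty$,'' but the transfer operator $\hat\ml$ is a Markov operator on $L^1$, not on $L^\infty$. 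One has $\hat\ml\mathbf{1}(x)=\sum_{y\in\hat T^{-1}x}1/|\hat T'(y)|$, which is the density of $\hat T_*\hat\lambda$ with respect to $\hat\lambda$ and equals $1$ only when $\hat T$ preserves Lebesgue measure; in general it can exceed $1$ pointwise, so $\|\mathbb{P}_m g\|_\infty\le\|g\|_\infty$ is false. What is true, and what you need, is a uniform bound $\sup_{k,m}\|\mathbb{P}_m^k\|_{L^\infty\to L^\infty}=\sup_{k,m}\|\mathbb{P}_m^k\mathbf{1}\|_\infty<\infty$; this follows from the uniform Lasota--Yorke inequality and the uniform sup-norm bound on $\hat f_m$ (equation \eqref{hatfbd}), but since those constants are themselves produced by the uniform spectral-gap argument you are setting up, it has to be established before you can appeal to it, not asserted as a generic Markov property. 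A second, smaller quibble: you attribute the $\ln m$ to the countable-branch structure of $\hat T$, but the $\ln m$ factor already appears for finite-branch Lasota--Yorke maps in \cite{Li} and \cite{BB}; it is the generic price of interpolating a quantity that is $O(1/m)$ in $L^\infty$ but only $O(1)$ in BV (equivalently, the $|\ln\epsilon|$ factor in the Keller--Liverani resolvent estimate), and the countable branches of $\hat T$ affect the size of the constants, not the presence of the logarithm.
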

\begin{remark}
We recall that in \cite{BB} it was shown that the constant $\hat C$, which is independent of $m$, can be computed explicitly.
\end{remark}
\section{Pointwise Approximations for invariant densities of Maps with a neutral fixed point}\label{nonexp}
\subsection{The non-uniformly expanding system}\label{non} Let $I=[0,1]$ be the unit interval, $\lambda$ be Lebesgue measure on $[0,1]$. Let $T: I\rightarrow I$ be a piecewise smooth map with two branches. We assume that
\begin{itemize}
\item $T(0)=0$ and there is a $x_{0}\in(0,1)$ such that $T_{1}=T\mid_{[0,x_{0})}, T_{2}=T\mid_{[x_{0},1]}$ and
$T_{1}: [0,x_{0})\overset{\text{onto}}\rightarrow [0,1), T_{2}: [x_{0},1]\overset{\text{onto}}\rightarrow [0,1];$\\
\item $T_1$ is $C^1$ on $[0,x_0]$, $T_1$ is $C^2$ on $(0,x_0]$ and $T_2$ is $C^2$ on $[x_0,1]$.\\
\item $T'(0)=1$ and $T'(x)>1$ for  $x\in(0,x_{0})$ ; $|T'(x)|\geq \beta>1$ for  $x\in(x_{0},1);$\\
\item $T_{1}$  and $T'_{1}$ have the form
$$T_{1}(x)= x+x^{1+\alpha}+x^{1+\alpha}\delta_{0}(x),$$
$$T'_{1}(x)= 1+(1+\alpha)x^{\alpha}+x^{\alpha}\delta_{1}(x),$$
where, $0<\alpha<1$ and $\delta_{i}(x)\rightarrow 0$ as $x\rightarrow 0$ for $i = 0,1 $ with $\delta'_{0}(x)\geq 0.$
\end{itemize}
It is well known that $T$ admits a unique invariant density $f^*$ \cite{H, LSV, P, Y} and the system $(I,T,f^*\cdot\lambda)$ exhibits a polynomial mixing rate \cite{H, LSV, Y}. Moreover, it is well known \cite{H, LSV, Y} that the $T$-invariant density, $f^*$, is not an $L^{\infty}$-function. In particular, near $x=0$, $f^*(x)$ behaves like $x^{-\alpha}$. Despite this difficulty, we will show that, for any $x\in(0,1]$, one can obtain rigorous pointwise approximation of $f^*(x)$.
\subsection{Strategy and the statement of the main result}\label{strategy}
Recall that $\alpha\in(0,1)$. We first define a suitable Banach space that contains $f^*$. More precisely, let $\mathcal{B}$ denote the set of continuous functions on $(0,1]$ with the norm $$\parallel f\parallel_{\mathcal{B}}=\sup\limits_{x\in(0,1]}|x^{1+\alpha}f(x)|.$$
When equipped with the norm $\parallel \cdot\parallel_{\mathcal{B}}$, $\mathcal{B}$ is a Banach space\footnote{In what follows, we only use the metric properties of $\mathcal B$. In particular, the completeness of $\mathcal B$ is not needed in our proofs.}. The fact that $f^*\in\mathcal B$ follows from Lemma 3.3 of \cite{H}. Our strategy for obtaining pointwise approximation $f^*$ consists of the following steps:
\begin{enumerate}
\item We first induce $T$ on $\Delta\subset I$ and obtain a $\hat T$ which satisfies the assumptions of subsection \ref{expanding}.
\item On $\Delta$, we use Theorem \ref{BB} to say that $\hat f_m$, the invariant density of the discretized operator $\mathbb P_m:=Q_m\circ\hat\ml$, defined in equation \eqref{discrete}, provides a uniform approximation of $\hat f$ the $\hat T$-invariant density.
\item Next we write $f^*$ in terms of $\hat f$, and define a function $f_m$ as the `pullback' of $\hat f_m$.
\item Finally, we use steps (2) and (3) to prove that $||f^*- f_m||_{\mathcal B}\le C^*\frac{\ln m}{m}$, and deduce a pointwise approximation of $f^*$.
\end{enumerate}
\subsubsection{The induced system}
We induce $T$ on $\Delta:=[x_0,1]$. For $n\ge 0$ we define
$$x_{n+1} = T_{1}^{-1}(x_{n}).$$
Set
$$W_{0}:=(x_0,1),\text{and } W_{n}:=(x_{n},x_{n-1}),\, n\ge 1.$$
For $n\ge 1$, we define
$$Z_n:=T_2^{-1}(W_{n-1}).$$
Then we define the induced map $\hat T:\Delta\to\Delta$ by
\begin{equation}\label{induced}
\hat T(x)=T^{n}(x)\text{ for $x\in Z_{n}$.}
\end{equation}
Observe that
$$T(Z_{n})= W_{n-1}\text{ and } \tau_{Z_{n}}=n,$$
where $\tau_{Z_{n}}$ is the first return time of $Z_{n}$ to $\Delta$. An example of the map $T$ and its induced counterpart $\hat T$ are shown in Figures \ref{Fig1} and \ref{Fig2} respectively.
It is well known (see for instance \cite{Y}) that the $\hat T$ defined in \eqref{induced} satisfies the assumptions of subsection \ref{expanding}, and, by Theorem \ref{BB}, one can obtain a rigorous \textit{uniform} approximation of its invariant density $\hat f$. Moreover, by Lemma 3.3 of \cite{BV}, $f^*$, the invariant density of $T$, can be written in terms of $\hat f$:
\begin{figure}[htbp] 
   \centering
   \includegraphics[width=2.5in]{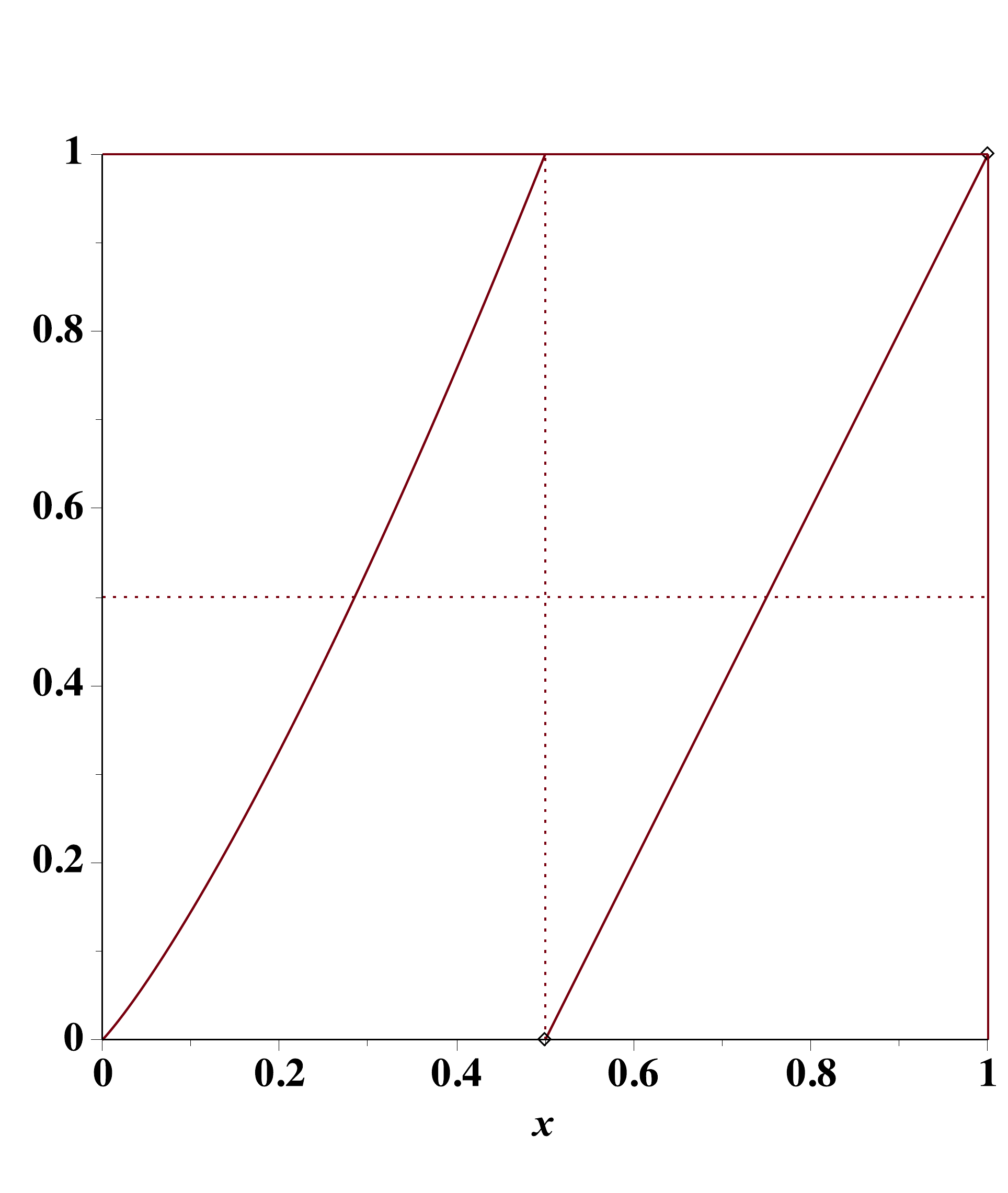}
   \caption{A typical example of a map $T$ which belongs to the family defined in subsection \ref{non}.}   \label{Fig1}
\end{figure}
\begin{figure}[htbp] 
   \centering
   \includegraphics[width=4in]{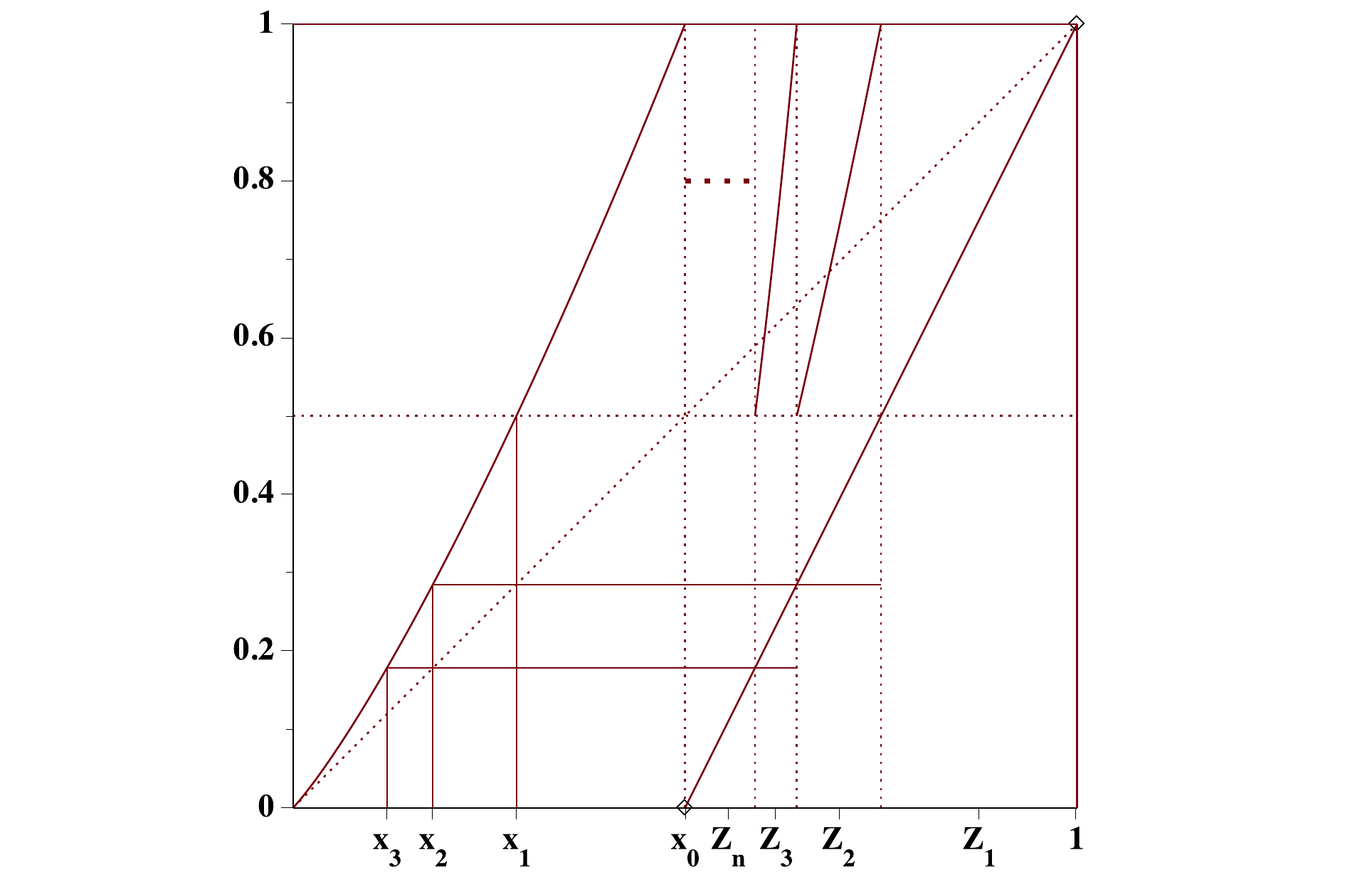}
   \caption{This figure shows the induced map $\hat T$ corresponding to the map $T$ of Figure \ref{Fig1}.}   \label{Fig2}
\end{figure}

\begin{equation}\label{density}
f^*(x)=\left\{\begin{array}{ccc}
c_{\tau} \hat f(x)&\mbox{for $x\in \Delta$}\\
\text{ }\\
c_{\tau}\sum\limits_{n=1}\limits^{\infty}\left(\frac{\hat f(T_{2}^{-1}T_{1}^{-(n-1)}x)}{|DT^{(n)}(T_{2}^{-1}T_{1}^{-(n-1)}x)|} \right) &\mbox{for $x\in I\setminus\Delta$}
\end{array}
\right. ,
\end{equation}
where $\hat f$ is the $\hat T$-invariant density, $c^{-1}_{\tau}=\sum_{k=1}^{\infty} \tau_{Z_{k}} \hat\mu(Z_{k})$, and $\hat\mu=\hat f\cdot\hat\lambda$.
\subsubsection{The approximate density and the statement of the main result}
Set
\begin{equation}\label{approx}
f_m(x)\overset{\text{def}}{:=}\left\{\begin{array}{ccc}
c_{\tau,m}\hat f_{m}(x)&\mbox{for $ x\in \Delta$}\\
c_{\tau,m}\sum\limits_{n=1}\limits^{\infty} \left(\frac{\hat f_{m}(T_{2}^{-1}T_{1}^{-(n-1)}x)}{|DT^{(n)}(T_{2}^{-1}T_{1}^{-(n-1)}x)|}\right) &\mbox{for $x\in I\setminus\Delta$}
\end{array}
\right. ,
\end{equation}
where $\hat f_m=\mathbb P_m\hat f_m$, and $\mathbb P_m$ is the Markov discretization of $\hat\ml$ defined in \eqref{discrete}, $c^{-1}_{\tau,m}=\sum_{k=1}^{\infty} \tau_{Z_{k}} \hat\mu_m(Z_{k})$, and $\hat\mu_m=\hat f_m\cdot\hat\lambda$. The next result shows that the function $f_m$ defined in \eqref{approx} provides a rigorous pointwise approximation of $f^*$.
\begin{theorem}\label{main}
For any $m\in\mathbb N$ we have
$$|| f^*- f_m||_{\mathcal B}\le C^*\frac{\ln m}{m},$$
where
$$C^*=\hat C\left(1+\frac{x^{1+\alpha}_{0}}{\beta}+M(1+\alpha)\right)C_4;$$
in particular, $\hat C$ is the computable constant of Theorem \ref{BB},
$$M:= \frac{C_{1}^{1+\alpha}e^{2C_{0}C_{1}^{2\alpha}}}{\beta},$$
$$C_0:=\frac{\alpha(1+\alpha)}{2}[1+2\delta_{0}(x_{0})+\delta^{2}_{0}(x_{0})],\,\ C_1:=(2[2^{\frac{1}{\alpha}}-1])^{1/\alpha},$$
$$C_4:=1+C_3(\frac{C_{LY}}{1-\gamma}+\frac{1}{|\Delta|}), C_3:=\frac{1}{\beta}+\frac{C_{2}}{\beta|\Delta|}(\alpha+\frac{2-\alpha}{1-\alpha}),$$
 and
 $$C_{2} = \frac{1-x_0}{x^{1+\alpha}_0}  2^{1+\frac{1}{\alpha}}[2^{\frac{1}{\alpha}}-1]^{1+\frac{1}{\alpha}}.$$
\end{theorem}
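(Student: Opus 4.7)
The plan is to follow the roadmap in subsection \ref{strategy}: using \eqref{density} and \eqref{approx}, I would split $f^* - f_m$ into its restrictions to $\Delta$ and to $I \setminus \Delta$ and control each piece in $\|\cdot\|_{\mathcal B}$, taking Theorem \ref{BB} as the black-box input at the base and propagating the rate $\hat C \ln m / m$ through the pull-back formula. The weighted norm is engineered precisely so that the blow-up of the density near the neutral fixed point is cancelled by the decay of the iterated pre-image derivatives of $T$.

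The first substep controls the normalization constants. From $c_\tau^{-1} = \int_\Delta \tau \, \hat f \, d\hat\lambda$ and the analogue for $c_{\tau,m}$,
$$|c_\tau^{-1} - c_{\tau,m}^{-1}| \le \|\hat f - \hat f_m\|_\infty \sum_k \tau_{Z_k}\, \hat\lambda(Z_k),$$
and the tail bound $\hat\lambda(Z_k) \sim k^{-1-1/\alpha}$, which comes from the standard asymptotic $x_n \sim (n\alpha)^{-1/\alpha}$ quantified by $C_0$ and $C_1$, together with $\tau_{Z_k} = k$ makes the sum convergent for $\alpha < 1$. Combined with Theorem \ref{BB} and uniform positive lower bounds on $c_\tau^{-1}, c_{\tau,m}^{-1}$ (coming from a $BV$ bound on $\hat f_m$ via the Lasota-Yorke inequality \eqref{LYBV}), this yields $|c_\tau - c_{\tau,m}| = O(\ln m / m)$.

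On $\Delta$, where $x^{1+\alpha} \le 1$, the triangle inequality $|c_\tau \hat f(x) - c_{\tau,m} \hat f_m(x)| \le |c_\tau - c_{\tau,m}|\,\hat f(x) + c_{\tau,m}|\hat f(x) - \hat f_m(x)|$ combined with the previous step and Theorem \ref{BB} already gives the portion of $C^*$ coming from the factor $1 + x_0^{1+\alpha}/\beta$. On $[0, x_0)$ the series in \eqref{density}--\eqref{approx} leads to
$$x^{1+\alpha}|f^*(x) - f_m(x)| \le \sum_{n \ge 1} \frac{x^{1+\alpha}\, |c_\tau \hat f(y_n) - c_{\tau,m} \hat f_m(y_n)|}{|DT^{(n)}(y_n)|}, \quad y_n = T_2^{-1} T_1^{-(n-1)} x,$$
and after pulling the uniform $O(\ln m / m)$ estimate for the numerators outside, the core task reduces to the geometric bound $\sup_{x \in (0, x_0)} x^{1+\alpha} \sum_n |DT^{(n)}(y_n)|^{-1} \le M(1+\alpha)$. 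I would factor $|DT^{(n)}(y_n)| = |T_2'(y_n)| \cdot |DT_1^{n-1}(T_2 y_n)| \ge \beta\, |DT_1^{n-1}(T_2 y_n)|$, apply a Koebe-type distortion estimate on the diffeomorphism $T_1^{n-1} : W_{n+k-2} \to W_{k-1}$ (where $x \in W_{k-1}$) to compare $|DT_1^{n-1}|$ with $|W_{k-1}|/|W_{n+k-2}|$ up to a bounded multiplicative factor (the origin of the constants $C_0, C_1, M$), and collapse the remainder using $\sum_n |W_{n+k-2}| \le x_{k-2}$. The weight $x^{1+\alpha} \le x_{k-2}^{1+\alpha} \sim k^{-(1+\alpha)/\alpha}$ is tuned to exactly cancel the factor $|W_{k-1}|^{-1} \sim k^{1+1/\alpha}$ left behind.

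\emph{Main obstacle.} The hardest step is this last one: tracking the Koebe-type distortion with sharp multiplicative constants (producing $C_0, C_1, M$), managing the coupled sums over the iteration index $n$ and the location index $k$ of the block $W_{k-1}$ containing $x$, and combining these with the precise polynomial tail estimates from $x_n \sim (n\alpha)^{-1/\alpha}$ to extract the explicit form of $M$, $C_2, C_3, C_4$ that appears in $C^*$. The ingredients are classical (bounded distortion for $C^2$ branches, polynomial tail asymptotics at the neutral fixed point, and Lasota-Yorke $BV$ bounds on $\hat f_m$), but the explicit bookkeeping is where the bulk of the work lies.
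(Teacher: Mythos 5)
Your overall roadmap matches the paper's: decompose the $\mathcal B$-norm over $\Delta$ and $I\setminus\Delta$, estimate $|c_\tau - c_{\tau,m}|$ via the tail estimate on $\hat\lambda(Z_k)$ and Theorem~\ref{BB}, bound $\sup_\Delta|\hat f|$ via the Lasota--Yorke inequality, and reduce the off-$\Delta$ contribution to the weighted geometric sum $x^{1+\alpha}\sum_n|DT^{(n)}(y_n)|^{-1}$. Those steps correspond to the paper's Lemmas~\ref{new1} and~\ref{new2} and display~\eqref{eq2}, and your treatment of them is correct (one small imprecision: $c_\tau, c_{\tau,m}\le 1$ follows from $\sum_k k\,\hat\mu(Z_k)\ge\sum_k\hat\mu(Z_k)=1$, not from a $BV$ bound; the $BV$ bound is only needed for $\sup_\Delta\hat f$).

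Where you diverge from the paper is the mechanism for the geometric sum, and this is a genuine alternative route. You propose conditioning on the block $W_{k-1}\ni x$ and invoking a Koebe-type distortion to compare $DT_1^{n-1}$ on $W_{n+k-2}$ with the length ratio $|W_{k-1}|/|W_{n+k-2}|$, then summing $\sum_n|W_{n+k-2}|=x_{k-2}$ and cancelling the $k$-dependence against the weight. The paper instead telescopes the weight directly: writing $g(y):=(T_1y/y)^{1+\alpha}/T_1'(y)$, one has $x^{1+\alpha}/DT_1^{n-1}\big(T_1^{-(n-1)}x\big)=\big(\prod_{k=1}^{n-1}g(T_1^{-k}x)\big)\cdot\big(T_1^{-(n-1)}x\big)^{1+\alpha}$, and then Lemmas~\ref{le2.2} and~\ref{le2.3} ($g(y)\le 1+C_0y^{2\alpha}$, $x_n\le C_1n^{-1/\alpha}$) give the uniform-in-$x$ bound $G_n(x)\le M(n-1)^{-(1+1/\alpha)}$ without ever tracking the location index $k$. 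This is what directly produces $C_0$, $C_1$, $M$ and the final factor $1+x_0^{1+\alpha}/\beta+M(1+\alpha)$. Both arguments are fundamentally distortion estimates near the neutral fixed point and both would yield an explicit constant, but the paper's telescoping is more elementary to quantify (no separate $k$-uniform distortion constant, no double sum in $(n,k)$) because the $(1+\alpha)$-weight was chosen precisely so that $\prod g$ becomes a convergent, explicitly bounded product. Your approach would work, and you correctly identify the constant-chasing as the hard part, but if you carry it out you will land on different-looking (though equivalent-in-spirit) constants than the paper's $C_0,C_1,M$, which come specifically from the $g$-telescope.
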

As a direct consequence of the Theorem \ref{main} we obtain the required pointwise approximation of $f^*$:
\begin{corollary}\label{co1}
For any $x\in(0,1]$ we have
$$| f^*(x)- f_m(x)|\le \frac{C^*}{x^{1+\alpha}}\frac{\ln m}{m}.$$
\end{corollary}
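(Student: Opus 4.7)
The plan is extremely short, because Corollary \ref{co1} is really just a rereading of Theorem \ref{main} in terms of pointwise values. The only work is to unpack the definition of the norm $\|\cdot\|_{\mathcal B}$.

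First I would recall that the Banach space $\mathcal B$ consists of continuous functions on $(0,1]$ endowed with the norm
\[
\|f\|_{\mathcal B} = \sup_{x\in(0,1]} |x^{1+\alpha} f(x)|.
\]
Since the invariant density $f^*$ lies in $\mathcal B$ (by Lemma~3.3 of \cite{H}) and the approximate density $f_m$ defined in \eqref{approx} is also continuous on $(0,1]$ (being a positive combination of compositions of $\hat f_m$, which is continuous and piecewise linear on $\Delta$, with the smooth inverse branches of $T$), the difference $f^* - f_m$ belongs to $\mathcal B$ and its $\mathcal B$-norm is meaningful.

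Next I would apply Theorem \ref{main} to get
\[
\sup_{x\in(0,1]} |x^{1+\alpha}\bigl(f^*(x) - f_m(x)\bigr)| = \|f^* - f_m\|_{\mathcal B} \le C^*\,\frac{\ln m}{m}.
\]
Fixing any $x\in(0,1]$ and keeping only the term corresponding to that $x$ in the supremum yields
\[
|x^{1+\alpha}\bigl(f^*(x)-f_m(x)\bigr)| \le C^*\,\frac{\ln m}{m}.
\]
Dividing both sides by the strictly positive quantity $x^{1+\alpha}$ (here is where the pointwise statement is restricted to $x>0$, consistent with the fact that $f^*(0^+)=\infty$) gives the desired bound
\[
|f^*(x)-f_m(x)| \le \frac{C^*}{x^{1+\alpha}}\,\frac{\ln m}{m}.
\]

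There is essentially no obstacle: all the analytic work has been done in Theorem \ref{main}, and the corollary is simply the translation of the weighted $L^\infty$ estimate into a pointwise estimate with an explicit blow-up rate as $x\to 0^+$. The only subtle point worth flagging is that the bound degrades like $x^{-(1+\alpha)}$, which matches (and is forced by) the known boundary behavior $f^*(x)\asymp x^{-\alpha}$ near the neutral fixed point, so one should not expect a uniform estimate on all of $I$.
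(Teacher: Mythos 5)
Your argument is correct and is essentially identical to the paper's one-line proof: both simply unwind the definition of $\|\cdot\|_{\mathcal B}$, apply Theorem~\ref{main}, and divide by $x^{1+\alpha}>0$. The extra remarks about continuity of $f_m$ and the blow-up rate matching $f^*(x)\asymp x^{-\alpha}$ are sound context but not needed for the deduction.
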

\begin{proof}
For $x\in(0,1]$, we have
$$| f^*(x)- f_m(x)|=\frac{1}{x^{1+\alpha}}|x^{1+\alpha}( f^*(x)- f_m(x))|\le \frac{1}{x^{1+\alpha}}|| f^*- f_m||_{\mathcal B}\le\frac{1}{x^{1+\alpha}}C^*\frac{\ln m}{m}.$$
\end{proof}
\section{Proofs}\label{proofs}
\subsection{Technical lemmas}
We first introduce notation of certain functions which appear in the proof of Theorem \ref{main}. For $x\in I\setminus \Delta$ set:
\begin{eqnarray*}
&&g(x):=\frac{(\frac{T_{1}x}{x})^{1+\alpha}}{T'_{1}(x)},\\
&&G_{1}(x):=\frac{x^{1+\alpha}}{|T_{2}'(T_{2}^{-1}x)|};\quad\text{ and for }n\geq 2,\quad G_{n}(x):=\frac{x^{1+\alpha}}{|DT^{(n)}(T_{2}^{-1}T_{1}^{-(n-1)}x)|}.
\end{eqnarray*}
\begin{lemma}\label{le2.1} For $x\in I\setminus\Delta$, we have
$$[1+x^{\alpha}+x^{\alpha}\delta_{0}(x)]^{1+\alpha}\leq 1+(1+\alpha)[x^{\alpha}+x^{\alpha}\delta_{0}(x)]+\frac{\alpha(1+\alpha)}{2}[x^{\alpha}+x^{\alpha}\delta_{0}(x)]^{2}.$$
\end{lemma}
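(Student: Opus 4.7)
The plan is to recognize that the right-hand side is precisely the degree-two Taylor polynomial at $u = 0$ of the function $f(u) = (1+u)^{1+\alpha}$, evaluated at $u := x^{\alpha} + x^{\alpha}\delta_{0}(x) = x^{\alpha}(1+\delta_{0}(x))$. So the claim reduces to an elementary calculus inequality controlling the Taylor remainder.

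First I would check that $u \geq 0$ on the relevant range. Since $x \in I\setminus\Delta = [0,x_0)$, the hypotheses $\delta_0(x) \to 0$ as $x\to 0^+$ and $\delta_0'(x)\geq 0$ force $\delta_0(x)\geq 0$, hence $u = x^{\alpha}(1+\delta_0(x)) \geq 0$. Then I would apply Taylor's theorem with Lagrange remainder to $f$. Computing
$$f'(u) = (1+\alpha)(1+u)^{\alpha}, \quad f''(u) = (1+\alpha)\alpha(1+u)^{\alpha-1}, \quad f'''(u) = (1+\alpha)\alpha(\alpha-1)(1+u)^{\alpha-2},$$
Taylor's theorem gives
$$f(u) = 1 + (1+\alpha)u + \frac{\alpha(1+\alpha)}{2}u^{2} + \frac{(1+\alpha)\alpha(\alpha-1)}{6}(1+\xi)^{\alpha-2} u^{3}$$
for some $\xi\in(0,u)$. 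Because $\alpha\in(0,1)$, the factor $(\alpha-1)$ is strictly negative, while $(1+\xi)^{\alpha-2}>0$ and $u^{3}\geq 0$. Hence the remainder term is non-positive, and the stated inequality follows after substituting back the definition of $u$.

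A cleanly equivalent approach (useful as a sanity check) is to set $h(u):=(1+u)^{1+\alpha} - (1+\alpha)u - \tfrac{\alpha(1+\alpha)}{2}u^{2}$; then $h(0)=1$ and $h'(u) = (1+\alpha)\bigl[(1+u)^{\alpha} - 1 - \alpha u\bigr]\leq 0$ for $u\geq 0$ by concavity of $t\mapsto t^{\alpha}$ (equivalently, the Bernoulli-type inequality $(1+u)^{\alpha}\leq 1+\alpha u$ valid for $\alpha\in(0,1]$ and $u\geq -1$). So $h(u)\leq 1$ for $u\geq 0$, yielding the same conclusion. No genuine obstacle is anticipated; the only subtlety is to justify nonnegativity of $\delta_0$ on $[0,x_0)$ from the two standing assumptions on $\delta_0$.
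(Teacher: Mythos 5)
Your proof is correct and takes essentially the same route as the paper. Both arguments reduce to the Bernoulli-type bound $(1+u)^{\alpha}\leq 1+\alpha u$ via a derivative comparison; the paper carries the chain rule through in the variable $x$ (differentiating $\phi_1,\phi_2$ and using $\xi'(x)\geq 0$), whereas you change variables to $u=x^{\alpha}(1+\delta_0(x))$ and argue directly, which is a cleaner presentation of the same idea --- your Taylor-with-Lagrange-remainder version is just a compact repackaging of the same sign condition on the third derivative, and you also make explicit the small point (implicit in the paper's footnote) that the standing hypotheses on $\delta_0$ force $u\geq 0$.
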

\begin{proof}
Let $$\phi_1(x):=[1+x^{\alpha}+x^{\alpha}\delta_{0}(x)]^{1+\alpha}$$
and
$$\phi_2(x):=1+(1+\alpha)[x^{\alpha}+x^{\alpha}\delta_{0}(x)]+\frac{\alpha(1+\alpha)}{2}[x^{\alpha}+x^{\alpha}\delta_{0}(x)]^{2}.$$
Note that $\phi_1(0)=\phi_2(0)=1$. Therefore, to prove the lemma, it is enough to prove that $\phi_1'(x)\le\phi'_2(x).$ We have:
\begin{eqnarray*}
&&\phi_1'(x)=(1+\alpha)(1+\xi(x))^{\alpha}\xi'(x)\\
&&\phi'_2(x)=(1+\alpha)(1+\alpha\xi(x))\xi'(x),
\end{eqnarray*}
where $\xi(x):=x^{\alpha}+x^{\alpha}\delta_{0}(x)\geq 0$\footnote{It is obvious that $\xi(0)=0$ and for $x>0$, $\xi(x)>0.$}. Notice that $\xi'(x)\geq 0$. Thus, we only need to show that
\begin{equation}\label{est}
(1+\xi(x))^{\alpha}\leq(1+\alpha\xi(x)).
\end{equation}
Indeed, \eqref{est} holds because
$(1+\xi(0))^{\alpha}=(1+\alpha\xi(0))=1$ and
$$[(1+\xi(x))^{\alpha}]'=\frac{\alpha}{(1+\xi(x))^{1-\alpha}}\xi'(x)\leq\alpha\xi'(x)=[1+\alpha\xi(x)]'.$$

\end{proof}

\begin{lemma}\label{le2.2} For $x\in I\setminus\Delta$, we have
$g(x)\leq 1+ C_{0}x^{2\alpha},$ where
$$C_{0}= \frac{\alpha(1+\alpha)}{2}[1+2\delta_{0}(x_{0})+\delta^{2}_{0}(x_{0})].$$
\end{lemma}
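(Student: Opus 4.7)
The plan is to unwind the definition of $g$, apply Lemma~\ref{le2.1} to the numerator, and find a matching lower bound for the denominator so the quadratic term from Lemma~\ref{le2.1} survives but the linear terms cancel exactly. Write
$$g(x)=\frac{\bigl(1+x^{\alpha}+x^{\alpha}\delta_{0}(x)\bigr)^{1+\alpha}}{1+(1+\alpha)x^{\alpha}+x^{\alpha}\delta_{1}(x)},$$
using $T_{1}(x)/x=1+x^{\alpha}+x^{\alpha}\delta_{0}(x)$ and the stated form of $T_{1}'$.

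The key observation is that $\delta_{0}$ and $\delta_{1}$ are not independent. Differentiating $T_{1}(x)=x+x^{1+\alpha}(1+\delta_{0}(x))$ and comparing with the given expansion for $T_{1}'(x)$ yields
$$x^{\alpha}\delta_{1}(x)=(1+\alpha)x^{\alpha}\delta_{0}(x)+x^{1+\alpha}\delta_{0}'(x).$$
Since $\delta_{0}'\ge 0$ by hypothesis, this gives the denominator lower bound
$$T_{1}'(x)\ge 1+(1+\alpha)x^{\alpha}\bigl(1+\delta_{0}(x)\bigr).$$

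Set $A:=(1+\alpha)x^{\alpha}(1+\delta_{0}(x))$ and $B:=\tfrac{\alpha(1+\alpha)}{2}x^{2\alpha}(1+\delta_{0}(x))^{2}$. Lemma~\ref{le2.1} (after factoring $x^{\alpha}+x^{\alpha}\delta_{0}=x^{\alpha}(1+\delta_{0})$) bounds the numerator of $g(x)$ by $1+A+B$, while the denominator is at least $1+A$. Hence
$$g(x)\le \frac{1+A+B}{1+A}=1+\frac{B}{1+A}\le 1+B,$$
which is the desired shape: a $1$ plus a purely quadratic-in-$x^{\alpha}$ remainder.

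It remains to control $(1+\delta_{0}(x))^{2}$ uniformly for $x\in I\setminus\Delta=[0,x_{0})$. Because $\delta_{0}'\ge 0$, $\delta_{0}$ is nondecreasing; combined with $\delta_{0}(x)\to 0$ as $x\to 0^{+}$ this forces $0\le\delta_{0}(x)\le\delta_{0}(x_{0})$ on $[0,x_{0})$, so $(1+\delta_{0}(x))^{2}\le 1+2\delta_{0}(x_{0})+\delta_{0}^{2}(x_{0})$. Plugging this into $B$ produces exactly $C_{0}x^{2\alpha}$, completing the proof. The only step that takes any real thought is the derivation of the algebraic relation between $\delta_{0}$ and $\delta_{1}$, which is what allows the linear-in-$x^{\alpha}$ terms to cancel cleanly between numerator and denominator; once that is in hand the rest is routine.
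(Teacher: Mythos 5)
Your proof is correct and follows essentially the same route as the paper: bound the numerator of $g$ via Lemma~\ref{le2.1}, then control the resulting fraction using a lower bound on the denominator $T_1'(x)$ so that the linear-in-$x^\alpha$ terms cancel and only the quadratic remainder survives. The one place you add something the paper does not spell out is the derivation of the identity
$$x^{\alpha}\delta_{1}(x)=(1+\alpha)x^{\alpha}\delta_{0}(x)+x^{1+\alpha}\delta_{0}'(x)$$
by differentiating $T_1(x)=x+x^{1+\alpha}(1+\delta_0(x))$ and comparing with the stated form of $T_1'$. The paper's proof simply asserts, in the middle of its chain of inequalities, that
$$\frac{1+(1+\alpha)[x^{\alpha}+x^{\alpha}\delta_{0}(x)]}{1+(1+\alpha)x^{\alpha}+x^{\alpha}\delta_{1}(x)}\le 1,$$
which is exactly the consequence $\delta_1(x)\ge(1+\alpha)\delta_0(x)$ that your identity, together with $\delta_0'\ge 0$, delivers. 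Making that relation explicit fills a small expository gap, but the underlying mechanism is identical, and your final step --- using monotonicity of $\delta_0$ to bound $(1+\delta_0(x))^2$ by $1+2\delta_0(x_0)+\delta_0^2(x_0)$ on $[0,x_0)$ --- matches the paper exactly.
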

\begin{proof}
Using Lemma \ref{le2.1}, we have:
\begin{eqnarray*}
g(x)&=&\frac{(\frac{T_{1}x}{x})^{1+\alpha}}{T'_{1}(x)}=\frac{[1+x^{\alpha}+x^{\alpha}\delta_{0}(x)]^{1+\alpha}}{1+(1+\alpha)x^{\alpha}+x^{\alpha}\delta_{1}(x)}\\
&\leq&\frac{1+(1+\alpha)[x^{\alpha}+x^{\alpha}\delta_{0}(x)]+\frac{\alpha(1+\alpha)}{2}[x^{\alpha}+x^{\alpha}\delta_{0}(x)]^{2}}
{1+(1+\alpha)x^{\alpha}+x^{\alpha}\delta_{1}(x)}\\
&=&\frac{1+(1+\alpha)[x^{\alpha}+x^{\alpha}\delta_{0}(x)]}{1+(1+\alpha)x^{\alpha}+x^{\alpha}\delta_{1}(x)}+
\frac{\frac{\alpha(1+\alpha)}{2}[x^{\alpha}+x^{\alpha}\delta_{0}(x)]^{2}}{1+(1+\alpha)x^{\alpha}+x^{\alpha}\delta_{1}(x)}\\
&\leq&1 + \frac{\alpha(1+\alpha)}{2}[x^{\alpha}+x^{\alpha}\delta_{0}(x)]^{2}\\
&=&1+\frac{\alpha(1+\alpha)}{2}(1+2\delta_{0}(x)+\delta^{2}_{0}(x))x^{2\alpha}\leq1+C_{0}x^{2\alpha}.\\
\end{eqnarray*}
\end{proof}

\begin{lemma}\label{le2.3} Let $x_{n}=T_{1}^{-n}x_{0}.$ For $ n\geq 1, x_{n}\leq C_{1}n^{-\frac{1}{\alpha}},$ where $C_{1} = (2[2^{\frac{1}{\alpha}}-1])^{1/\alpha}.$
\end{lemma}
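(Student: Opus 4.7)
Since $\delta_0 \geq 0$ (it is nondecreasing and vanishes at $0$), the map $T_1$ satisfies $T_1(x) \geq x + x^{1+\alpha}$. Applied to the defining relation $x_{n-1} = T_1(x_n)$, this gives the per-step drop inequality
$$x_{n-1} - x_n = x_n^{1+\alpha}\bigl(1 + \delta_0(x_n)\bigr) \geq x_n^{1+\alpha}, \qquad n \geq 1.$$
This lower bound is essentially the only analytic input; the rest of the argument is combinatorial.

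My strategy is a halving argument on a geometric grid of scales. I would define levels $r_k := x_0 \cdot 2^{-k/\alpha}$ for $k \geq 0$, and first-hitting times $N_k := \min\{n : x_n \leq r_k\}$, so that $N_0 = 0$ and $N_k \to \infty$. To estimate the length of the $k$-th halving phase, observe that for $N_{k-1} \leq n < N_k$ we have $x_n > r_k$, so the drop inequality forces $x_{n-1} - x_n > r_k^{1+\alpha}$ at each such step. Telescoping these drops over $n = N_{k-1}+1, \dots, N_k - 1$ and bounding the total drop above by $r_{k-1} - r_k = r_k(2^{1/\alpha}-1)$ (using $x_{N_{k-1}} \leq r_{k-1}$ and $x_{N_k-1} > r_k$), I obtain
$$N_k - N_{k-1} \leq 1 + \frac{r_{k-1} - r_k}{r_k^{1+\alpha}} = 1 + \frac{(2^{1/\alpha}-1)\cdot 2^k}{x_0^\alpha},$$
and summing the geometric series for $k = 1, \dots, K$ yields $N_K \leq 2(2^{1/\alpha}-1)\cdot 2^K/x_0^\alpha$ up to a lower-order $O(K)$ term.

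To finish, given $n \geq 1$ I take $K$ to be the largest integer with $N_K \leq n$; then $x_n \leq r_K = x_0 \cdot 2^{-K/\alpha}$, and inverting the above bound on $N_K$ to express $r_K$ in terms of $n$ produces $x_n \leq \bigl(2(2^{1/\alpha}-1)\bigr)^{1/\alpha}\, n^{-1/\alpha} = C_1 n^{-1/\alpha}$. For very small $n$ the inequality is anyway trivial: $x_n \leq x_0 < 1 \leq C_1 n^{-1/\alpha}$ whenever $n \leq C_1^\alpha = 2(2^{1/\alpha}-1)$, and one checks $C_1 \geq 1$ for all $\alpha \in (0,1)$. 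The main obstacle is precisely this final bookkeeping: the $+1$ appearing in each per-phase estimate, the lower-order $K$ contribution, and the integer floor in the choice of $K$ must all be absorbed without loss to reach exactly the stated constant $C_1 = (2(2^{1/\alpha}-1))^{1/\alpha}$, rather than something like $(4(2^{1/\alpha}-1))^{1/\alpha}$ which a careless count would yield.
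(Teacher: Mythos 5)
Your drop inequality $x_{n-1}-x_n \geq x_n^{1+\alpha}$ and the dyadic scale-counting setup are both correct, and the method does prove the qualitative statement $x_n = O(n^{-1/\alpha})$. However, as you yourself flag at the end, the argument as written does not deliver the stated constant $C_1=\bigl(2[2^{1/\alpha}-1]\bigr)^{1/\alpha}$, and this is a genuine gap rather than a cosmetic one. The problem is structural: from "$K$ is the largest integer with $N_K\leq n$" you can only conclude $n < N_{K+1}$, and your bound $N_{K+1}\lesssim \frac{2(2^{1/\alpha}-1)}{x_0^\alpha}\,2^{K+1}$ then controls $n$ by roughly \emph{twice} what is needed to close the inequality $r_K\leq C_1 n^{-1/\alpha}$ (which requires $n\leq \frac{2(2^{1/\alpha}-1)}{x_0^\alpha}\,2^K$). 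So the method naturally produces $C_1'\approx (4(2^{1/\alpha}-1))^{1/\alpha}$, off by a factor $2^{1/\alpha}$. Refining the grid (ratio $\rho\to 1^+$) will not rescue this cleanly, since the additive $+1$ per phase then dominates. Because $C_1$ feeds directly into $C_2$, $C_3$, $C_4$, $M$ and finally $C^*$ in Theorem \ref{main}, the exact value is not negotiable here, so the bookkeeping you defer is actually the crux.

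The paper avoids the issue entirely by a direct induction: assume $x_{n-1}\leq C_1(n-1)^{-1/\alpha}$, suppose for contradiction $x_n > C_1 n^{-1/\alpha}$, apply $T_1$ using its explicit form $T_1(x)=x+x^{1+\alpha}(1+\delta_0(x))$, and reduce to the inequality $n\bigl[(1+\tfrac{1}{n-1})^{1/\alpha}-1\bigr] > C_1^\alpha[1+\delta_0(\cdot)]$. Convexity of $z\mapsto z^{1/\alpha}$ (secant over $[1,2]$) bounds the left side by $\tfrac{n}{n-1}[2^{1/\alpha}-1]\leq 2[2^{1/\alpha}-1]=C_1^\alpha$, giving the contradiction. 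The value of $C_1$ is chosen precisely so this closes; there is no telescoping loss. If you want to salvage your approach, you would need either a continuous-scale (ODE comparison) version of the telescoping argument, where the $+1$ per phase disappears, or simply accept a worse constant and adjust the downstream formulas; but to match the paper's $C_1$, the inductive route is the efficient one.
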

\begin{proof}
Observe that $C_{1}>1\geq T^{-1}_{1}(x_{0})=x_{1}.$ Therefore, the lemma is true for $n=1$.
Next, for $n\geq2,$ we suppose that $x_{n-1}\leq C_{1}(n-1)^{-\frac{1}{\alpha}},$ and prove that $x_{n}\leq C_{1}n^{-\frac{1}{\alpha}}.$
If it is false, that is $x_{n}> C_{1}n^{-\frac{1}{\alpha}},$ then by our inductive statement on $x_{n-1}$, we have:
$$C_{1}(n-1)^{-\frac{1}{\alpha}}\geq x_{n-1}= T_{1}(x_{n})>C_{1}n^{-\frac{1}{\alpha}}[1+C^{\alpha}_{1}n^{-1}+C^{\alpha}_{1}n^{-1}\delta_{0}(C_{1}n^{-\frac{1}{\alpha}})].$$
This is equivalent to
$$n[(1+\frac{1}{n-1})^{\frac{1}{\alpha}}-1]> C^{\alpha}_{1}[1+\delta_{0}(C_{1}n^{-\frac{1}{\alpha}})].$$
By convexity of  the function $z^{\frac{1}{\alpha}},$ it follows $\frac{n}{n-1}[2^{\frac{1}{\alpha}}-1]> C^{\alpha}_{1}[1+\delta_{0}(C_{1}n^{-\frac{1}{\alpha}})],$ that is
$$C^{\alpha}_{1}< \frac{n}{n-1}[2^{\frac{1}{\alpha}}-1]/[1+\delta_{0}(C_{1}n^{-\frac{1}{\alpha}})]<2[2^{\frac{1}{\alpha}}-1]=C^{\alpha}_{1}.$$
A contradiction. Therefore, $x_{n}\leq C_{1}n^{-\frac{1}{\alpha}}$, and this completes the proof of the lemma.
\end{proof}
\begin{lemma}\label{Gn}
For $x\in I\setminus\Delta$, we have
$$G_{1}(x)\leq\frac{x_{0}^{1+\alpha}}{\beta}.$$
and for $n\ge 2$,
$$G_n(x)\le M(n-1)^{-(1+\frac{1}{\alpha})},$$
where $M = \frac{C_{1}^{1+\alpha}e^{2C_{0}C_{1}^{2\alpha}}}{\beta}.$
\end{lemma}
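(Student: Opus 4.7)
The bound on $G_1$ is essentially immediate: since $x \in I \setminus \Delta = [0, x_0)$ and $|T_2'| \geq \beta$ on $[x_0, 1]$, we have $G_1(x) = x^{1+\alpha}/|T_2'(T_2^{-1}x)| \leq x_0^{1+\alpha}/\beta$. The real work is the polynomial-decay estimate on $G_n$ for $n\ge 2$, and my plan is as follows.

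First I would unfold $DT^{(n)}$ via the chain rule along the orbit. Writing $y_0 := T_2^{-1}T_1^{-(n-1)}x$ and setting $z_0 := T_2(y_0) = T_1^{-(n-1)}x$, $z_k := T_1^k(z_0)$ for $k = 0, 1, \dots, n-1$ (so $z_{n-1} = x$), the chain rule gives
\[
|DT^{(n)}(y_0)| \;=\; |T_2'(y_0)| \cdot \prod_{k=0}^{n-2} T_1'(z_k) \;\ge\; \beta \prod_{k=0}^{n-2} T_1'(z_k).
\]
Hence $G_n(x) \le \beta^{-1} \cdot x^{1+\alpha}/\prod_{k=0}^{n-2} T_1'(z_k)$, and the task reduces to controlling this last quotient.

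The key observation is that $g(z) = (T_1(z)/z)^{1+\alpha}/T_1'(z)$ is tailor-made to produce a telescoping product: $T_1'(z_k) = z_{k+1}^{1+\alpha}/(g(z_k)\,z_k^{1+\alpha})$. Multiplying over $k = 0, \dots, n-2$ and cancelling consecutive factors collapses everything to the endpoints, yielding
\[
\frac{x^{1+\alpha}}{\prod_{k=0}^{n-2} T_1'(z_k)} \;=\; z_0^{1+\alpha} \prod_{k=0}^{n-2} g(z_k).
\]
This is the step that converts a multiplicative expansion bound into a tractable sum.

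It remains to estimate each factor. For $z_0 = T_1^{-(n-1)}x$: since $T_1$ is increasing and $x < x_0$, $z_0 \le T_1^{-(n-1)}(x_0) = x_{n-1}$, and Lemma \ref{le2.3} gives $z_0 \le C_1(n-1)^{-1/\alpha}$, so $z_0^{1+\alpha} \le C_1^{1+\alpha}(n-1)^{-(1+1/\alpha)}$, which is exactly the polynomial decay we want. For the product, Lemma \ref{le2.2} gives $g(z_k) \le 1 + C_0 z_k^{2\alpha}$, hence
\[
\prod_{k=0}^{n-2} g(z_k) \;\le\; \exp\!\left( C_0 \sum_{k=0}^{n-2} z_k^{2\alpha} \right).
\]
Applying Lemma \ref{le2.3} termwise with the index change $j = n-1-k$ gives $z_k^{2\alpha} \le C_1^{2\alpha} j^{-2}$, so the sum is bounded by $C_1^{2\alpha} \sum_{j\ge 1} j^{-2} \le 2 C_1^{2\alpha}$ (using $\pi^2/6 \le 2$). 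Combining these three estimates yields $G_n(x) \le (C_1^{1+\alpha} e^{2C_0 C_1^{2\alpha}}/\beta)(n-1)^{-(1+1/\alpha)} = M(n-1)^{-(1+1/\alpha)}$.

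The main subtlety I expect is bookkeeping the direction of iteration (confirming that $z_k = T_1^{-(n-1-k)}x$ really does satisfy $z_k \le x_{n-1-k}$ with the right index alignment), plus the mild care needed to see that the comparison $z_0 \le x_{n-1}$ is legitimate despite $x$ lying in $[0,x_0)$ rather than at $x_0$; monotonicity of $T_1^{-1}$ on $[0,1)$ handles this cleanly.
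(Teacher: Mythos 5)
Your proof is correct and follows essentially the same route as the paper: expand $DT^{(n)}$ along the orbit via the chain rule, recognize that the ratio $x^{1+\alpha}/\prod T_1'$ telescopes into $(T_1^{-(n-1)}x)^{1+\alpha}\prod g(\cdot)$, and then control the endpoint factor via Lemma \ref{le2.3} and the product via Lemma \ref{le2.2} together with Lemma \ref{le2.3} (the paper's bound $\sum_{k=1}^{n-1}k^{-2}\le 2-\frac{1}{n-1}$ and your $\pi^2/6\le 2$ land on the same constant $e^{2C_0C_1^{2\alpha}}$). The only cosmetic difference is that you iterate forward with $z_k=T_1^k(z_0)$ while the paper writes the factors directly as $g(T_1^{-k}x)$; the index change $j=n-1-k$ you flag is handled identically.
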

\begin{proof}
For $n=1$, it is easy to see that
$$G_{1}(x)\leq\frac{x_{0}^{1+\alpha}}{\beta}.$$
For $n\geq 2$, we have
\begin{equation}\label{est1}
\begin{split}
G_{n}(x)&=\frac{x^{1+\alpha}}{|DT^{(n)}(T_{2}^{-1}T_{1}^{-(n-1)}x)|}\\
&=\frac{x^{1+\alpha}}{|D(T_1\circ T_1\circ\cdots\circ T_1\circ T_2)(T_{2}^{-1}T_{1}^{-(n-1)}x)|}\\
&=\frac{x^{1+\alpha}}{T'_{1}(T_{1}^{-1}x)\cdot T'_{1}(T_{1}^{-2}x)\cdots T'_{1}(T_{1}^{-(n-1)}x)\cdot |T'_{2}(T_{2}^{-1}T_{1}^{-(n-1)}x)|}\\
&=\frac{(\frac{x}{T_{1}^{-1}x})^{1+\alpha}}{T'_{1}(T_{1}^{-1}x)}\cdot\frac{(\frac{T_{1}^{-1}x}{T_{1}^{-2}x})^{1+\alpha}}{T'_{1}(T_{1}^{-2}x)}
\cdots\frac{(\frac{T_{1}^{-(n-2)}x}{T_{1}^{-(n-1)}x})^{1+\alpha}}{T'_{1}(T_{1}^{-(n-1)}x)}\cdot\frac{(T_{1}^{-(n-1)}x)^{1+\alpha}}{|T'_{2}(T_{2}^{-1}T_{1}^{-(n-1)}x)|}\\
&=g(T_{1}^{-1}x)\cdot g(T_{1}^{-2}x)\cdots g(T_{1}^{-(n-1)}x)\cdot\frac{(T_{1}^{-(n-1)}x)^{1+\alpha}}{|T'_{2}(T_{2}^{-1}T_{1}^{-(n-1)}x)|}\\
&\leq g(T_{1}^{-1}x)\cdot g(T_{1}^{-2}x)\cdots g(T_{1}^{-(n-1)}x)\cdot\frac{(T_{1}^{-(n-1)}x)^{1+\alpha}}{\beta}.
\end{split}
\end{equation}
By Lemmas \ref{le2.2} and \ref{le2.3}, for any $k\geq 1, x\in[0,x_{0})$, we have
\begin{equation}\label{est2}
\begin{split}
g(T^{-k}_{1}(x))&\leq1+ C_{0}(T^{-k}_{1}(x))^{2\alpha}\leq1+ C_{0}(T^{-k}_{1}(x_{0}))^{2\alpha}\\
&=1+ C_{0}(x_{k})^{2\alpha}\le1+C_{0}C_{1}^{2\alpha}k^{-2}.
\end{split}
\end{equation}
Therefore, using \eqref{est1} and \eqref{est2}, for $n\geq 2$, we obtain:
\begin{eqnarray*}
G_{n}(x)&=&\prod\limits_{k=1}\limits^{n-1}g(T^{-k}_{1}(x))\cdot\frac{(T^{-(n-1)}_{1}(x))^{1+\alpha}}{\beta}\\
&\leq&\prod\limits_{k=1}\limits^{n-1}(1+C_{0}C_{1}^{2\alpha}k^{-2})\cdot\frac{C_{1}^{1+\alpha}(n-1)^{-(1+\frac{1}{\alpha})}}{\beta}\\
&=&\exp\{\sum\limits_{k=1}\limits^{n-1}\ln(1+C_{0}C_{1}^{2\alpha}k^{-2})\}\cdot\frac{C_{1}^{1+\alpha}(n-1)^{-(1+\frac{1}{\alpha})}}{\beta}\\
&\leq&\exp\{\sum\limits_{k=1}\limits^{n-1}C_{0}C_{1}^{2\alpha}k^{-2}\}\cdot\frac{C_{1}^{1+\alpha}(n-1)^{-(1+\frac{1}{\alpha})}}{\beta}\\
&\leq&\exp\{C_{0}C_{1}^{2\alpha}(2-\frac{1}{n-1})\}\cdot\frac{C_{1}^{1+\alpha}(n-1)^{-(1+\frac{1}{\alpha})}}{\beta}\\
&\leq& M(n-1)^{-(1+\frac{1}{\alpha})}.
\end{eqnarray*}
\end{proof}
\begin{lemma}\label{new1}
$$\sum\limits_{n=1}\limits^{\infty} n\cdot \hat\lambda(Z_{n})\leq C_3,$$ where $C_3=\frac{1}{\beta}+\frac{C_{2}}{\beta(1-x_0)}(\alpha+\frac{2-\alpha}{1-\alpha})$ and
$C_{2} = \frac{1-x_0}{x^{1+\alpha}_0}  2^{1+\frac{1}{\alpha}}[2^{\frac{1}{\alpha}}-1]^{1+\frac{1}{\alpha}}.$
\end{lemma}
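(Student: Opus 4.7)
The plan is to bound each term $\hat\lambda(Z_n)$ separately and then sum. Since $Z_n=T_2^{-1}(W_{n-1})$ and $|T_2'|\ge\beta$ on $\Delta$, the mean-value theorem gives $\lambda(Z_n)\le \lambda(W_{n-1})/\beta$, and since $\hat\lambda=\lambda/(1-x_0)$ on $\Delta=[x_0,1]$, the task reduces to estimating the lengths $\lambda(W_{n-1})$. For $n=1$ this is immediate: $W_0=(x_0,1)$ has length $1-x_0$, so $\hat\lambda(Z_1)\le 1/\beta$, which accounts for the first summand of $C_3$.

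For $n\ge 2$, I would write $\lambda(W_{n-1})=x_{n-2}-x_{n-1}=T_1(x_{n-1})-x_{n-1}$ and use the explicit form of $T_1$ from section~\ref{non} to obtain
\[\lambda(W_{n-1})=x_{n-1}^{1+\alpha}\bigl(1+\delta_0(x_{n-1})\bigr).\]
To turn $1+\delta_0(x_{n-1})$ into an explicit constant I would exploit two facts together: monotonicity $\delta_0'\ge 0$ (so $\delta_0(x_{n-1})\le \delta_0(x_0)$ since $x_{n-1}\le x_0$), and the onto condition $T_1(x_0)=1$, which, rewritten through the formula for $T_1$, yields the explicit identity $1+\delta_0(x_0)=(1-x_0)/x_0^{1+\alpha}$. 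This is exactly the factor that makes $C_2$ appear naturally. Combining with Lemma~\ref{le2.3}, namely $x_{n-1}^{1+\alpha}\le C_1^{1+\alpha}(n-1)^{-1-1/\alpha}$, and observing that $C_1^{1+\alpha}=2^{1+1/\alpha}[2^{1/\alpha}-1]^{1+1/\alpha}$, I arrive at the clean bound
\[\hat\lambda(Z_n)\le \frac{C_2}{\beta(1-x_0)}\,(n-1)^{-1-1/\alpha}.\]

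What remains is the tail sum $\sum_{n=2}^\infty n(n-1)^{-1-1/\alpha}$. Writing $n=(n-1)+1$ and reindexing $k=n-1$, this splits as $\sum_{k\ge 1}k^{-1/\alpha}+\sum_{k\ge 1}k^{-1-1/\alpha}$; both series converge because $\alpha\in(0,1)$ forces $1/\alpha>1$. Elementary integral comparisons give $\sum_{k\ge 1} k^{-1/\alpha}\le 1/(1-\alpha)$ and $\sum_{k\ge 1} k^{-1-1/\alpha}\le 1+\alpha$, and a short algebraic simplification combines these to $\alpha+(2-\alpha)/(1-\alpha)$. Adding back the $n=1$ contribution $1/\beta$ recovers the stated constant $C_3$.

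The only step that really needs care, and which I expect to be the main obstacle for a reader, is the extraction of $1+\delta_0(x_0)=(1-x_0)/x_0^{1+\alpha}$ from the onto hypothesis combined with the monotonicity of $\delta_0$; this is what forces the $(1-x_0)/x_0^{1+\alpha}$ factor in $C_2$ and makes the final constants match. Note that the continuous $C^1$ extension of $T_1$ to $[0,x_0]$ (assumed in section~\ref{non}) is precisely what makes the evaluation $T_1(x_0)=1$ legitimate; the rest of the argument is routine.
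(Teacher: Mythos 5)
Your proof is correct and takes essentially the same route as the paper: bound $\hat\lambda(Z_n)$ through $\lambda(W_{n-1})/\beta$, invoke Lemma~\ref{le2.3} for the polynomial decay of $x_n$, and estimate the resulting series $\sum n(n-1)^{-1-1/\alpha}$ by splitting and integral comparison. In fact your careful derivation of the inequality $1+\delta_0(x_{n-1})\le 1+\delta_0(x_0)=(1-x_0)/x_0^{1+\alpha}$ (from $\delta_0'\ge 0$ and the onto condition $T_1(x_0)=1$) patches a small slip in the paper, which writes $T_1(x_n)-x_n=\frac{1-x_0}{x_0^{1+\alpha}}x_n^{1+\alpha}$ as an equality where it should read $\le$.
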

\begin{proof}
By Lemma \ref{le2.3}, we have $ \lambda(W_n)=x_{n-1}-x_n= T_1(x_n)-x_n=\frac{1-x_0}{x^{1+\alpha}_0} x_n^{1+\alpha}\leq \frac{1-x_0}{x^{1+\alpha}_0}C^{1+\alpha}_1n^{-(1+\frac{1}{\alpha})}
= C_{2}n^{-(1+\frac{1}{\alpha})}.$
Since $T_2(Z_n)=W_{n-1}$, we have
\begin{eqnarray*}
\sum\limits_{n=1}\limits^{\infty} n\cdot \lambda(Z_{n})
&\leq& \lambda(Z_{1})+\sum\limits_{n=2}\limits^{\infty} n\cdot\frac{ \lambda(W_{n-1})}{\beta}\\
&\leq& \frac{1-x_0}{\beta}+\sum\limits_{n=2}\limits^{\infty}\frac{n(x_{n-2}-x_{n-1})}{\beta}\\
&=&\frac{1-x_0}{\beta}+\sum\limits_{n=1}\limits^{\infty}\frac{(n+1)(x_{n-1}-x_{n})}{\beta}\\
&=&\frac{1-x_0}{\beta}+\sum\limits_{n=1}\limits^{\infty}\frac{n(x_{n-1}-x_{n})}{\beta}+\sum\limits_{n=1}\limits^{\infty}\frac{(x_{n-1}-x_{n})}{\beta}\\
&\leq&\frac{1-x_0}{\beta}+\sum\limits_{n=1}\limits^{\infty} \frac{C_{2}}{\beta}n^{-\frac{1}{\alpha}}+
\sum\limits_{n=1}\limits^{\infty} \frac{C_{2}}{\beta}n^{-(1+\frac{1}{\alpha})}\\
&\leq&\frac{1-x_0}{\beta}+\frac{C_{2}}{\beta}(1+\int\limits_{1}\limits^{\infty}x^{-\frac{1}{\alpha}}dx)+
\frac{C_{2}}{\beta}(1+\int\limits_{1}\limits^{\infty}x^{-(1+\frac{1}{\alpha})}dx)\\
&=&\frac{1-x_0}{\beta}+\frac{C_{2}}{\beta}(\alpha+\frac{2-\alpha}{1-\alpha})=(1-x_0)\cdot C_3.
\end{eqnarray*}
This completes the proof of the lemma since $\hat{\lambda}(\cdot)= \frac{\lambda(\cdot)}{1-x_0}$.
\end{proof}
\begin{lemma}\label{new2}
We have
$$|c_{\tau,m}-c_{\tau}|\leq C_3\cdot \hat C\frac{\ln m}{m}.$$
\end{lemma}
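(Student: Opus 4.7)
The plan is to reduce the bound on $|c_{\tau,m}-c_\tau|$ to a bound on the difference of their reciprocals, which splits cleanly into a sum that is controlled term-by-term using Theorem \ref{BB} and Lemma \ref{new1}.

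First, I would write the difference of the reciprocals as a single series over the level sets $Z_k$:
\begin{equation*}
c_\tau^{-1}-c_{\tau,m}^{-1}=\sum_{k=1}^{\infty}\tau_{Z_k}\bigl(\hat\mu(Z_k)-\hat\mu_m(Z_k)\bigr)=\sum_{k=1}^{\infty}k\int_{Z_k}\bigl(\hat f-\hat f_m\bigr)\,d\hat\lambda,
\end{equation*}
so that by the triangle inequality and the uniform bound of Theorem \ref{BB},
\begin{equation*}
|c_\tau^{-1}-c_{\tau,m}^{-1}|\le \|\hat f-\hat f_m\|_\infty\sum_{k=1}^{\infty}k\,\hat\lambda(Z_k)\le \hat C\frac{\ln m}{m}\cdot C_3,
\end{equation*}
where the second inequality uses Lemma \ref{new1}. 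Interchanging sum and integral (and the subsequent termwise estimate) is justified because the series $\sum k\,\hat\lambda(Z_k)$ is finite by Lemma \ref{new1} and $\hat f,\hat f_m$ are bounded on $\Delta$.

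Next, I would pass from the reciprocals to $c_\tau$ and $c_{\tau,m}$ themselves via
\begin{equation*}
|c_\tau-c_{\tau,m}|=\frac{|c_\tau^{-1}-c_{\tau,m}^{-1}|}{c_\tau^{-1}\,c_{\tau,m}^{-1}}.
\end{equation*}
The key observation is that both $c_\tau^{-1}$ and $c_{\tau,m}^{-1}$ are at least $1$: since $\hat f$ and $\hat f_m$ are probability densities on $\Delta$ (the latter because $\mathbb P_m$ is a stochastic operator), we have $\sum_{k}\hat\mu(Z_k)=\sum_k\hat\mu_m(Z_k)=1$, and the weights $\tau_{Z_k}=k\ge 1$ give $c_\tau^{-1},c_{\tau,m}^{-1}\ge 1$. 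Therefore the denominator is $\ge 1$ and $|c_\tau-c_{\tau,m}|\le|c_\tau^{-1}-c_{\tau,m}^{-1}|$, which combined with the first step yields the stated bound.

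There is no serious obstacle here: the hard work has already been packaged into Lemma \ref{new1} (which tames the slow convergence of the $\hat\lambda(Z_k)$ using Lemma \ref{le2.3}) and into Theorem \ref{BB} (which provides the uniform rate $\hat C\frac{\ln m}{m}$ on the induced system). The only point requiring a small verification is the normalization argument used to pass from the reciprocals back to $c_\tau,c_{\tau,m}$; this is immediate once one notes that $\hat f_m$ is indeed a probability density.
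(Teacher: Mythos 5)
Your proof is correct and follows essentially the same route as the paper: write $|c_{\tau,m}-c_\tau|$ as $|c_\tau^{-1}-c_{\tau,m}^{-1}|/(c_\tau^{-1}c_{\tau,m}^{-1})$, drop the denominator since $c_\tau^{-1},c_{\tau,m}^{-1}\ge 1$ (equivalently, $c_\tau,c_{\tau,m}\le 1$ as the paper phrases it), and bound the remaining sum by $\|\hat f-\hat f_m\|_\infty\sum_k k\,\hat\lambda(Z_k)$ via Theorem~\ref{BB} and Lemma~\ref{new1}. The only difference is cosmetic: you bound the numerator before addressing the denominator, while the paper writes the ratio first and then invokes $c_\tau,c_{\tau,m}\le 1$; the ingredients and the logic are identical.
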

\begin{proof}
Using the fact that $c_{\tau}\le 1$,  $c_{\tau,m}\le 1$ and Theorem \ref{BB}, we have
\begin{eqnarray*}
|c_{\tau,m}-c_{\tau}|&\leq&\left|\frac{1}{\sum\limits_{k=1}\limits^{\infty} \tau_{Z_{k}} \hat\mu_{m}(Z_{k})}-\frac{1}{\sum\limits_{k=1}\limits^{\infty} \tau_{Z_{k}} \hat\mu(Z_{k})}\right|\\
&=&\left|\frac{\sum\limits_{k=1}\limits^{\infty}k[\hat\mu(Z_{k})-\hat\mu_m(Z_{k})]}{\sum\limits_{k=1}\limits^{\infty} \tau_{Z_{k}} \hat\mu_{m}(Z_{k})\cdot\sum\limits_{k=1}\limits^{\infty} \tau_{Z_{k}} \hat\mu(Z_{k})}
\right|\\
&\leq&\left(\sum\limits_{k=1}\limits^{\infty}k\int\limits_{Z_k}|\hat f-\hat f_m|d\hat\lambda\right)\\
&\leq&||\hat f-\hat f_m||_{\infty}\left(\sum\limits_{k=1}\limits^{\infty}k\hat\lambda(Z_k)\right)\\
&\leq&\hat C\frac{\ln m}{m}\cdot C_3.
\end{eqnarray*}
In the last estimate, we have used Lemma \ref{new1}.
\end{proof}
We now have all our tools ready to prove Theorem \ref{main}.
\begin{proof} ({\bf of Theorem \ref{main}})
Using \eqref{density} and \eqref{approx}, we have
\begin{equation}\label{eq1}
\begin{split}
&||f^*-f_m||_{\mathcal B}=\sup_{x\in(0,1]}|x^{1+\alpha}(f^*(x)-f_m(x))|\\
&\le \sup_{x\in I\setminus\Delta}|x^{1+\alpha}(f^*(x)-f_m(x))|+ \sup_{x\in \Delta}|x^{1+\alpha}(f^*(x)-f_m(x))|\\
&= \sup_{x\in I\setminus\Delta}|\sum\limits_{n=1}\limits^{\infty}\frac{x^{1+\alpha}}{DT^{(n)}(T_{2}^{-1}T_{1}^{-(n-1)}x)}(c_{\tau}\hat f(T_{2}^{-1}T_{1}^{-(n-1)}x)-c_{\tau,m}\hat f_m(T_{2}^{-1}T_{1}^{-(n-1)}x))|\\
&+ \sup_{x\in \Delta}|x^{1+\alpha}(c_{\tau}\hat f(x)-c_{\tau,m}\hat f_m(x))|.
\end{split}
\end{equation}
Notice that for $x\in I\setminus\Delta$, and $n\ge 1$, $z_n:=T_{2}^{-1}T_{1}^{-(n-1)}x\in\Delta$. Then using the fact that $c_{\tau}\le 1$,  $c_{\tau,m}\le 1$, Theorem \ref{BB}, Lemma \ref{new2}, and \eqref{eq1}, we obtain:
\begin{equation}\label{eq2}
\begin{split}
&||f^*-f_m||_{\mathcal B}\le \sup_{x\in I\setminus\Delta}|\sum\limits_{n=1}\limits^{\infty}\frac{x^{1+\alpha}}{DT^{(n)}(T_{2}^{-1}T_{1}^{-(n-1)}x)}|\cdot \sup_{z_n\in \Delta}|(c_{\tau}\hat f(z_n)-c_{\tau,m}\hat f_m(z_n)|\\
&+ \sup_{x\in \Delta}|c_{\tau}\hat f(x)-c_{\tau,m}\hat f_m(x)|\\
&\le\sup_{x\in I\setminus\Delta}|\sum\limits_{n=1}\limits^{\infty}\frac{x^{1+\alpha}}{DT^{(n)}(T_{2}^{-1}T_{1}^{-(n-1)}x)}|\times\\
&\hskip 5cm\left( \sup_{z_n\in \Delta}|\hat f(z_n)-\hat f_m(z_n)|+|c_{\tau}-c_{\tau,m}|\sup_{z_n\in \Delta}|\hat f(z_n)|\right)\\
&+ \sup_{x\in \Delta}|\hat f(x)-\hat f_m(x)|+|c_{\tau}-c_{\tau,m}|\sup_{x\in \Delta}|\hat f(x)|\\
&\le\hat C\frac{\ln m}{m}\left(\sup_{x\in I\setminus\Delta}\sum_{n=1}^\infty |G_n(x)|(1+C_3\sup_{z_n\in \Delta}|\hat f(z_n)|)+(1+C_3\sup_{x\in \Delta}|\hat f(x)|)\right).
\end{split}
\end{equation}
Since $\hat f\in BV(\Delta)$, we have $\sup_{x\in\Delta}|\hat f(x)|\le V_{\Delta}\hat f+\frac{1}{1-x_0}||\hat f||_{1,\Delta}$. Therefore, using the Lasota-Yorke inequality \eqref{LYBV}, we obtain $\sup_{x\in\Delta}|\hat f(x)|\le\frac{C_{LY}}{1-\gamma}+\frac{1}{1-x_0}$.
Using Lemma \ref{Gn} and \eqref{eq2}, we obtain:
\begin{eqnarray*}
||f^*-f_m||_{\mathcal B}&\le&C_4\hat C\frac{\ln m}{m}\left(1+\frac{x^{1+\alpha}_{0}}{\beta}+\sum\limits_{n=2}\limits^{\infty} M(n-1)^{-(1+\frac{1}{\alpha})}\right)\\
&=&C_4\hat C\frac{\ln m}{m}\left(1+\frac{x^{1+\alpha}_{0}}{\beta}+M\sum\limits_{n=1}\limits^{\infty} n^{-(1+\frac{1}{\alpha})}\right)\\
&\le&C_4\hat C\left(1+\frac{x^{1+\alpha}_{0}}{\beta}+M(1+\alpha)\right)\cdot\frac{\ln m}{m}.
\end{eqnarray*}
\end{proof}

\section{Algorithm and Feasibility}\label{Algo}
Given a map $T$ satisfying the conditions of section \ref{non}, and $x^\ast\in (0,1],$ we provide an algorithm based on corollary \ref{co1} that can be used to approximate $f^\ast(x^\ast),$ the $T-$invariant density at the point $x^\ast,$ up to a pre-specified  approximation error $R.$

\subsection{Algorithm and output}
\begin{enumerate}
\item Compute the constants $\beta,M,C_4$ which appear in Theorem \ref{main}.

\item Compute an upper bound on the constant $\hat C$ which appears in Theorem \ref{BB}.

\item Then use $(1) , (2)$ to compute $C^\ast$ which appear in Theorem \ref{main}.

\item Find $m^\ast,$ number of bins, such that
$$\frac{\ln m^\ast}{m^\ast}\leq\frac{{x^\ast}^{1+\alpha}}{C^\ast}\cdot\frac{R}{3}.$$

\item Compute an approximate fixed point\footnote{Recall that $\hat f_m$ is the fixed point of the finite rank operator $\mathbb P_m$ defined in \eqref{discrete}. Here ${\tilde{\hat{f}}_m}$ is the computer approximation of $\hat f_m$; i.e., (5) of the Algorithm takes care of the computer roundoff errors in computing the fixed point of $\mathbb P_m$. Since $\hat f_m>0$, we also ask in this computation that ${\tilde{\hat f}}_m>0$. Note that the strict positivity of $\hat f_m$ follows from the fact that the induced map $\hat T$ is a piecewise onto map, which implies that the matrix representation of $\mathbb P$ is irreducible, and consequently its Perron eigenvector is strictly positive (see Perron-Frobenius Theorem \cite{LT}). }. ${\tilde{\hat{f}}_m}>0$ of $\mathbb P_m$ so that $||{\tilde{\hat{f}}_m}-\hat f_m||_{\infty}\le\eps$ on $\Delta$, where $\eps$ is chosen such that 
$$|{\tilde{f}_m}(x^*)-f_m(x^*)|\le \frac{R}{3},$$
where $${\tilde{f}_m}(x)={\tilde c}_{\tau,m}\sum\limits_{n=1}\limits^{\infty} \left(\frac{{\tilde{\hat{f}}_m}(T_{2}^{-1}T_{1}^{-(n-1)}x)}{|DT^{(n)}(T_{2}^{-1}T_{1}^{-(n-1)}x)|}\right),$$
${\tilde c}^{-1}_{\tau,m}:=\sum\limits_{k=1}\limits^{\infty}k{\tilde{\hat\mu}}_m(Z_k)$ and ${\tilde{\hat\mu}}={\tilde{\hat{f}}_m}\cdot\hat\lambda$.

\item Find $N^\ast$ such that 
$$|{\tilde c}_{\tau,m}(N^*)\sum\limits_{n=1}^{N^\ast}\limits \left(\frac{{\tilde{\hat{f}}_m}(T_{2}^{-1}T_{1}^{-(n-1)}x^\ast)}{|DT^{(n)}(T_{2}^{-1}T_{1}^{-(n-1)}x^\ast)|}\right)-{\tilde f}_m(x^*)|\leq \frac{R}{3},$$
where ${\tilde c}^{-1}_{\tau,m}(N^\ast):=\sum\limits_{k=1}\limits^{N^\ast}k{\tilde{\hat\mu}}_m(Z_k)$.

\item The approximate value of $f^*(x^*)$ is given by\footnote{It is very important to notice that the approximation ${\tilde{f}}_{m, N^*}(x^*)$ is a {\em finite} sum.}:  
$${\tilde{f}}_{m, N^{*}}(x^*):={\tilde c}_{\tau,m}(N^*)\sum\limits_{n=1}^{N^\ast}\left(\frac{{\tilde{\hat{f}}_m}(T_{2}^{-1}T_{1}^{-(n-1)}x^\ast)}{|DT^{(n)}(T_{2}^{-1}T_{1}^{-(n-1)}x^\ast)|}\right).$$
\end{enumerate}
\subsection{Feasibility}
\begin{itemize}
\item For $(1),$ once the map $T$ is given, the constants $\beta,M,C_4$ can be computed analytically.

\item For $(2),$ the constant $\hat C$ appears in the approximation done on the induced system (See Theorem \ref{BB}). The induced system is a uniformly expanding map. The computation of $\hat C$ can be done following the ideas of \cite{BB} which is based on the spectral stability result of \cite{KL}.
    
\item $(3)$ is a consequence of $(1)$ and $(2).$

\item Once $C^\ast$ is computed in $(3),$ with $m:=m^\ast$ we know, from Corollary \ref{co1}, that $|f_m(x^*)-f^*(x^*)|\le R/3$.

\item For $(5)$ we should find out how small $\eps$ should be to ensure $$|{\tilde f}_m(x^*)-f_m(x^*)|\le\frac{R}{3}.$$ 
We propose the following method. To work out explicitly all the constants needed in verifying (5), we suppose $x^\ast\in W_k$, and $T_1(x)=x+2^{\alpha}x^{1+\alpha}$. In the following estimates \eqref{thesum}, \eqref{newc} and \eqref{hatfbd}, we prepare the ingredients to achieve our job. Firstly, following exactly the argument of Lemma 5.2 in \cite{BV}, for $x^*\in W_k$,
\begin{equation}\label{ineq1}
|DT^{(n)}(T_2^{-1}T_1^{-(n-1)}x^\ast)|\geq \beta(\frac{n+k}{k+2})^{\eta_k},
\end{equation}
where 
$$\eta_k=\frac{d(k+2)}{k+2+d},\hskip 0.5cm d=(1+\alpha)2^{\alpha}[\frac{1}{2(1+\alpha)^{\frac{1}{\alpha}}}+d_1]^{\alpha}>1,$$ and 
$$d_1=\frac{1}{2(\alpha)^{\frac{1}{\alpha}}}-\frac{1}{2(1+\alpha)^{\frac{1}{\alpha}}}.$$
Consequently, $\eta_k>1,$ and for $x^*\in W_k$,
\begin{equation}\label{thesum}
\sum_{n=1}^{\infty}\frac{1}{|DT^{(n)}(T_2^{-1}T_1^{-(n-1)}x^\ast)|}\le\frac{1}{\beta}(\frac{k+2}{k})^{\eta_{k}}\frac{k}{\eta_k-1}.
\end{equation}
Secondly, using the same argument as the one in the proof of Lemma \ref{new2}, we have
\begin{equation}\label{newc}
|{\tilde c}_{\tau,m}-c_{\tau,m}|\le C_3\cdot ||{\tilde{\hat f}}_m-\hat f_m||_{\infty}.
\end{equation}
Thirdly, it is well known\footnote{See for instance \cite{DL} Lemma 2.3.} that for $g\in BV(\Delta)$ we have $V_{\Delta}Q_m g\le g$, where $Q_m$ is the discretization defined in subsection \ref{sec:markov}. Therefore, $\mathbb P_m$ satisfies the same Lasota-Yorke inequality \eqref{LYBV} as $\hat{\mathcal L}$. In particular, this implies that $\hat f_m$, the $\mathbb P_m$ fixed point, satisfies:
\begin{equation}\label{hatfbd}
||\hat f_m||_{\infty}\le\frac{C_{LY}}{1-\gamma}+\frac{1}{|\Delta|}.
\end{equation}
Consequently, using  \eqref{thesum}, \eqref{newc}, \eqref{hatfbd} and that ${\tilde{c}}_{\tau,m}\le 1$, we obtain
\begin{equation}\label{epsR1}
|{\tilde{f}}_m(x^*)-f_m(x^*)|\le ||{\tilde{\hat f}}_m-\hat f_m||_{\infty}\cdot\frac{1}{\beta}(\frac{k+2}{k})^{\eta_{k}}\frac{k}{\eta_k-1}[1+C_3(\frac{C_{LY}}{1-\gamma}+\frac{1}{|\Delta|})].
\end{equation}
Thus, to ensure $|{\tilde f}_m(x^*)-f_m(x^*)|\le\frac{R}{3}$, ${\tilde{\hat f}}_m$ should be computed to a precision that satisfies  
$$ ||{\tilde{\hat f}}_m-\hat f_m||_{\infty}\le\frac{1}{\frac{1}{\beta}(\frac{k+2}{k})^{\eta_{k}}\frac{k}{\eta_k-1}[1+C_3(\frac{C_{LY}}{1-\gamma}+\frac{1}{|\Delta|})]}\cdot\frac{R}{3}.$$
\item For (6), as in (5), we also suppose $x^\ast\in W_k$, and $T_1(x)=x+2^{\alpha}x^{1+\alpha}$. Firstly, by \eqref{ineq1}, each term in the sum 
\begin{equation}\label{wsum}
\sum\limits_{n=1}\limits^{\infty} \left(\frac{{\tilde{\hat{f}}_m}(T_{2}^{-1}T_{1}^{-(n-1)}x^*)}{|DT^{(n)}(T_{2}^{-1}T_{1}^{-(n-1)}x^*)|}\right)
\end{equation}
is bounded above by 
$$\sup_{y\in \Delta}{\tilde{\hat f}}_{m}(y)\cdot\frac{1}{\beta}(\frac{k+2}{n+k})^{\eta_k}.$$ 
Since $\eta_k>1,$ the tail of the sum, starting from $N_1^*+1$, in \eqref{wsum} can be approximated as follows:

\bigskip

Choose $N_1^*$ such that
\begin{equation}\label{ineq2}
\sup_{y\in \Delta}{\tilde{\hat f}}_{m}(y)\cdot\frac{{(N_1^\ast+k)}^{-\eta_k+1}}{\eta_k-1}\cdot(k+2)^{\eta_k}\le \frac{R}{6}.
\end{equation}
Secondly, using $\sum_{k=1}^{\infty}k{\tilde{\hat\mu}}_m(Z_k)\ge 1$, and recalling that $\alpha\in(0,1)$ and $\inf_{y\in\Delta}{\tilde{\hat f}}_m(y)>0$, we have
\begin{equation}\label{thecs}
\begin{split}
|{\tilde c}_{\tau,m}- {\tilde c}_{\tau,m}(N_2^\ast)|&=\frac{\sum\limits_{N_2^\ast+1}^{\infty}k{\tilde{\hat\mu}}_m(Z_k)}{\sum_{k=1}^{\infty}k{\tilde{\hat\mu}}_m(Z_k)\cdot\sum_{k=1}^{N_2^\ast}k{\tilde{\hat\mu}}_m(Z_k)}\le\sup_{y\in\Delta}{\tilde{\hat f}}_m\frac{\sum\limits_{N_2^\ast+1}^{\infty}k\hat\lambda(Z_k)}{\sum_{k=1}^{N_2^\ast}k{\tilde{\hat\mu}}_m(Z_k)}\\
&\le \sup_{y\in\Delta}{\tilde{\hat f}}_m\frac{\sum\limits_{N_2^\ast+1}^{\infty}\frac{C_2}{\beta}\cdot k\cdot k^{-1-\frac{1}{\alpha}}}{\inf_{y\in\Delta}{\tilde{\hat f}}_m\cdot\hat\lambda(Z_1)}\le\frac{\sup_{y\in\Delta}{\tilde{\hat f}}_m}{\inf_{y\in\Delta}{\tilde{\hat f}}_m\cdot\hat\lambda(Z_1)} \frac{C_2}{\beta}\cdot \frac{{N_2^*}^{\frac{-1}{\alpha}+1}}{\frac{1}{\alpha}-1}.
\end{split}
\end{equation}
Thirdly, using \eqref{thesum} and \eqref{thecs}, we have
\begin{equation}\label{2ndN}
\begin{split}
|{\tilde c}_{\tau,m}- {\tilde c}_{\tau,m}(N_2^\ast)|{\tilde f}_{m,N_2^*}(x^*)\le &\left(\frac{\sup_{y\in\Delta}{\tilde{\hat f}}_m}{\inf_{y\in\Delta}{\tilde{\hat f}}_m\cdot\hat\lambda(Z_1)} \frac{C_2}{\beta}\cdot \frac{{N_2^*}^{\frac{-1}{\alpha}+1}}{\frac{1}{\alpha}-1}\right){\tilde f}_{m}(x^*)\\
&\le \frac{\sup_{y\in\Delta}{\tilde{\hat f}}_m^2}{\inf_{y\in\Delta}{\tilde{\hat f}}_m\cdot\hat\lambda(Z_1)} \frac{C_2}{\beta^2}\cdot \frac{{N_2^*}^{\frac{-1}{\alpha}+1}}{\frac{1}{\alpha}-1}(\frac{k+2}{k})^{\eta_{k}}\frac{k}{\eta_k-1}.
\end{split}
\end{equation}
Choose\footnote{In our analysis we verified many items of the Algorithm analytically. Many of these steps maybe verified iteratively using the computer. For instance, $N_2^*$ maybe found iteratively using a computer. Simply keep $\sum_{k=1}^{N_2^\ast}k{\tilde{\hat\mu}}_m(Z_k)$ in the denominator on the right hand side of \eqref{2ndN} instead of replacing it by $\inf_{y\in\Delta}{\tilde{\hat f}}_m\cdot\hat\lambda(Z_1)$.  The iterative method may find a smaller value of $N_2^*$ that achieves the job.} $N^*_2$ such that the right hand side of \eqref{2ndN} is not greater than $\frac{R}{6}$. Finally, choose $N^*=\max\{N^*_1, N^*_2\}$. This will lead to the desired estimate $|{\tilde f}_m(x^*)-{\tilde f}_{m,N^*}(x^*)|\le \frac{R}{3}$.
\item for (7) using the above six steps, we obtain 
$$|f^*(x^*)-{\tilde f}_{m,N^*}(x^*)|\le |f^*(x^*)-f_m(x^*)|+|f_m(x^*)-{\tilde f}_{m}(x^*)|+|{\tilde f}_{m}(x^*)-{\tilde f}_{m,N^*}(x^*)|\leq R;$$
i.e., the {\bf{finite sum}} ${\tilde f}_{m,N^*}(x^*):={\tilde c}_{\tau,m}(N^\ast)\sum\limits_{n=1}\limits^{N^\ast} \left(\frac{{\tilde{\hat f}}_{m}(T_{2}^{-1}T_{1}^{-(n-1)}x^\ast)}{|DT^{(n)}(T_{2}^{-1}T_{1}^{-(n-1)}x^\ast)|}\right)$
is a rigorous approximation of $f^\ast(x^\ast)$ up to the pre-specified error $R.$
\end{itemize}

\bigskip

\noindent{\bf Acknowledgment:} The authors would like to thank an anonymous referee for comments that improved both the presentation and the content of the paper.

\bibliographystyle{amsplain}

\end{document}